\theoremstyle{plain}
\newtheorem{theorem}{Theorem}[section]
\newtheorem{lemma}[theorem]{Lemma}
\newtheorem{corollary}[theorem]{Corollary}
\newtheorem{proposition}[theorem]{Proposition}
\theoremstyle{definition}
\newtheorem{example}[theorem]{Example}
\theoremstyle{remark}
\newcommand{\bN}{\mathbb{N}}
\newcommand{\N}{\bN}
\newcommand{\cA}{\mathcal{A}}
\newcommand{\cB}{\mathcal{B}}
\newcommand{\cF}{\mathcal{F}}
\newcommand{\cH}{\mathcal{H}}
\newcommand{\cI}{\mathcal{I}}
\newcommand{\I}{\cI}
\newcommand{\cJ}{\mathcal{J}}
\newcommand{\J}{\cJ}
\newcommand{\cN}{\mathcal{N}}
\newcommand{\cP}{\mathcal{P}}
\newcommand{\cW}{\mathcal{W}}
\newcommand{\Hindman}{\mathcal{H}} 
\newcommand{\Folkman}{\mathcal{F}} 
\DeclareMathOperator{\FS}{FS}
\begin{document}

%%%%%%%%%%%%%%%%%%%%%%%%%%%%%%%%%%%%%%%%%%%%%%%%%%
%%%%% TITLE
%%%%%%%%%%%%%%%%%%%%%%%%%%%%%%%%%%%%%%%%%%%%%%%%%%

\title{New Hindman spaces}

%%%%%%%%%%%%%%%%%%%%%%%%%%%%%%%%%%%%%%%%%%%%%%%%%%
%%%%% AUTHORS
%%%%%%%%%%%%%%%%%%%%%%%%%%%%%%%%%%%%%%%%%%%%%%%%%%

\address{Institute of Mathematics\\ Faculty of Mathematics, Physics and Informatics\\ University of Gda\'{n}sk\\ ul.~Wita Stwosza 57\\ 80-308 Gda\'{n}sk\\ Poland}

\author[R.Filip\'{o}w]{Rafa\l{} Filip\'{o}w}
%\address[Rafa\l{}~Filip\'{o}w]{Institute of Mathematics\\ Faculty of Mathematics, Physics and Informatics\\ University of Gda\'{n}sk\\ ul.~Wita Stwosza 57\\ 80-308 Gda\'{n}sk\\ Poland}
\email{Rafal.Filipow@ug.edu.pl}
\urladdr{http://mat.ug.edu.pl/~rfilipow}

\author[K.~Kowitz]{Krzysztof Kowitz}
%\address[Krzysztof Kowitz]{Institute of Mathematics\\ Faculty of Mathematics\\ Physics and Informatics\\ University of Gda\'{n}sk\\ ul.~Wita  Stwosza 57\\ 80-308 Gda\'{n}sk\\ Poland}
\email{Krzysztof.Kowitz@phdstud.ug.edu.pl}

\author[A.~Kwela]{Adam Kwela}
%\address[Adam Kwela]{Institute of Mathematics\\ Faculty of Mathematics\\ Physics and Informatics\\ University of Gda\'{n}sk\\ ul.~Wita  Stwosza 57\\ 80-308 Gda\'{n}sk\\ Poland}
\email{Adam.Kwela@ug.edu.pl}
\urladdr{http://kwela.strony.ug.edu.pl/}

\author[J.~Tryba]{Jacek Tryba}
%\address[Jacek Tryba]{Institute of Mathematics\\ Faculty of Mathematics\\ Physics and Informatics\\ University of Gda\'{n}sk\\ ul.~Wita  Stwosza 57\\ 80-308 Gda\'{n}sk\\ Poland}
\email{Jacek.Tryba@ug.edu.pl}

%%%%%%%%%%%%%%%%%%%%%%%%%%%%%%%%%%%%%%%%%%%%%%%%%%
%%%%% DATE
%%%%%%%%%%%%%%%%%%%%%%%%%%%%%%%%%%%%%%%%%%%%%%%%%%

\date{\today}

%%%%%%%%%%%%%%%%%%%%%%%%%%%%%%%%%%%%%%%%%%%%%%%%%%
%%%%% MSC (MATHEMATICAL SUBJECT CLASSIFICATION)
%%%%%%%%%%%%%%%%%%%%%%%%%%%%%%%%%%%%%%%%%%%%%%%%%%

\subjclass[2010]{Primary: 54A20, 05A17, 03E35; Secondary: 03E50, 05C55, 11P99}

%%%%%%%%%%%%%%%%%%%%%%%%%%%%%%%%%%%%%%%%%%%%%%%%%%
%%%%% KEYWORDS
%%%%%%%%%%%%%%%%%%%%%%%%%%%%%%%%%%%%%%%%%%%%%%%%%%

\keywords{Hindman space, van der Waerden space, sequentially compact space, Mrowka space, ideal, filter, summable ideal, almost disjoint family, IP-set, AP-set, IP-convergence, ideal convergence}

%%%%%%%%%%%%%%%%%%%%%%%%%%%%%%%%%%%%%%%%%%%%%%%%%%
%%%%% ABSTRACT
%%%%%%%%%%%%%%%%%%%%%%%%%%%%%%%%%%%%%%%%%%%%%%%%%%

\begin{abstract}
We introduce a method that allows to turn topological questions about Hindman spaces into purely combinatorial questions about the Kat\v{e}tov order of ideals on $\N$.
We also provide two applications of the method.
\begin{enumerate}
\item We characterize $F_\sigma$ ideals $\I$ for which there is a Hindman space which is not an $\I$-space under the continuum hypothesis. 
This reduces a topological question of Albin L.~Jones about consistency of existence of a Hindman space which is not van der Waerden to the question whether the ideal of all non AP-sets is not below the ideal of all non IP-sets in the Kat\v{e}tov order.
	\item 
Under the continuum hypothesis, we construct a Hindman space which is not an $\I_{1/n}$-space. 
This answers a question posed by Jana Fla\v{s}kov\'{a} at the 22nd Summer Conference on Topology and its Applications.
\end{enumerate}
\end{abstract}

%%%%%%%%%%%%%%%%%%%%%%%%%%%%%%%%%%%%%%%%%%%%%%%%%%
%%%%% MAKE TITLE
%%%%%%%%%%%%%%%%%%%%%%%%%%%%%%%%%%%%%%%%%%%%%%%%%%

\maketitle

%%%%%%%%%%%%%%%%%%%%%%%%%%%%%%%%%%%%%%%%%%%%%%%%%%
%%%%% SECTION
%%%%%%%%%%%%%%%%%%%%%%%%%%%%%%%%%%%%%%%%%%%%%%%%%%

In our paper we consider topological spaces which satisfy some properties connected with  well known combinatorial theorems, in particular van der Waerden's arithmetical progressions theorem and Hindman's finite sums theorem. Research into this topic was initiated by Kojman and later continued by Shelah and others in \cite{MR1866012,MR1887003,MR1950294,MR2052425,MR2471564,MR3097000,MR3276758}.

A topological space $X$ is called \emph{van der Waerden} \cite{MR1866012} if for every sequence
$\langle x_n\rangle_{n\in \N}$ in $X$  there exists a convergent subsequence 
$\langle x_{n}\rangle_{n\in A}$ with $A$ being an AP-set (i.e.~$A$ contains arithmetic progressions of arbitrary finite length). 

A nonempty family $\I\subseteq\cP(\N)$ of subsets of $\N$ is an \emph{ideal on $\N$} if it is closed under taking subsets and finite unions of its elements, $\N\notin\I$ and $\I$ contains all finite subsets of $\N$. It is easy to see that the family $\I_{1/n}=\{A\subseteq\N: \sum_{n\in A}\frac{1}{n+1}<\infty\}$ 
is an ideal on $\N$, and it follows from van der Waerden's theorem \cite{vanderWearden} that the family 
$\cW = \{A\subseteq\N: \text{$A$ is not an AP-set}\}$ is an ideal on $\N$. An ideal on $\N$ is called \emph{$F_\sigma$} if it is an $F_\sigma$ subset of $\cP(\N)$ with a topology induced from the Cantor space $\{0,1\}^\N$ (recall: a set is $F_\sigma$ if it is a countable union of closed sets).
It is not difficult to check that both $\I_{1/n}$ and $\cW$ are $F_\sigma$.

A topological space $X$ is called an \emph{$\I$-space} \cite{MR2471564} if for every sequence
$\langle x_n\rangle_{n\in \N}$ in $X$  there exists a converging subsequence 
$\langle x_{n}\rangle_{n\in A}$ with $A\notin \I$.
In particular, van der Waerden spaces coincide with $\cW$-spaces.

A set $A\subseteq \N$ is an \emph{IP-set}  if there exists an infinite set $D\subseteq\N$ such that $\FS(D)\subseteq A$ where $\FS(D)$ denotes the set of all finite non-empty sums of distinct elements of $D$.
The family $\Hindman = \{A\subseteq\N: \text{$A$ is not an IP-set}\}$ is an ideal on $\N$ (it follows from Hindman's theorem \cite{MR349574}). 

	In \cite{MR1887003}, Kojman proved that only finite spaces are $\cH$-spaces, and to obtain a meaningful topological counterpart of Hindman's finite sums theorem he utilized  the
notion of IP-convergence introduced by Furstenberg and Weiss in \cite{MR531271}.

An \emph{IP-sequence} in $X$ is a sequence indexed by $\FS(D)$ for some infinite $D\subseteq\N$.
An IP-sequence $\langle x_n\rangle_{n\in \FS(D)}$ in a topological space $X$ \emph{IP-converges} \cite{MR531271} to a point $x\in X$ if for every neighborhood $U$ of the point $x$ there exists $m\in \N$ so that $\{x_n: n\in \FS(D\setminus \{0,1,\dots,m\})\}\subseteq U$ (point $x$ is called the \emph{IP-limit} of the sequence).

A topological space $X$ is called \emph{Hindman} \cite{MR1887003} if for every sequence
$\langle x_n\rangle_{n\in \N}$ in $X$  there exists an infinite set $D\subseteq \N$ such that the subsequence $\langle x_n\rangle_{n\in \FS(D)}$ IP-converges to some $x\in X$.

	In \cite{MR250257}, Kat\v{e}tov introduced the following order to study convergence
in topological spaces.
For ideals $\I,\J$  we say that $\I$ is below $\J$ in the \emph{Kat\v{e}tov order} 
 if there is a function $f:\N\to \N$ such that $f^{-1}[A]\in\J$ for every $A\in \I$. We denote it by $\I\leq_K\J$. When $\I$ is not below $\J$ in Kat\v{e}tov order, we denote it by $\I\not\leq_K\J$.

\smallskip

In 2003, Kojman and Shelah \cite{MR1950294} constructed under the continuum hypothesis a van der Waerden space (i.e.~$\cW$-space) which  is not Hindman. Later Fla\v{s}kov\'{a} \cite{MR2471564} extended their result by constructing an $\I$-space which is not Hindman for every $F_\sigma$ ideal $\I$ under the assumption of Martin's axiom. In particular, she constructed $\I_{1/n}$-space which is not Hindman and posed a question \cite{Flaskova-slides} whether it is consistent that there is a Hindman space which is not an $\I_{1/n}$-space.  In Section~\ref{sec:Hindman-spaces-which-is-not-summable-space} we show that the answer to her question is positive under the continuum hypothesis. This follows from our main result: in Section ~\ref{sec:new-Hindman-spaces} we show that 
if an ideal $\I$ is not below the ideal $\cH$ in the Kat\v{e}tov order
then there exists a Hindman space which is not an $\I$-space. 

\smallskip

The spaces constructed by Kojman, Shelah and Fla\v{s}kov\'{a} are examples of Mr\'{o}wka spaces. In Section~\ref{sec:mrowka-is-never-Hindman} we show that Mr\'{o}wka spaces are not Hindman, therefore the spaces we construct in Section~\ref{sec:new-Hindman-spaces} are not Mr\'{o}wka spaces.

\smallskip

In \cite{MR2052425} Jones posed a question whether there is a Hindman space which is not van der Waerden. 
In fact, we show that, in the realm of $F_\sigma$ ideals, the consistency of existence of a Hindman space which is not an $\I$-space is equivalent to $\I\not\leq_K\cH$.
Thus, the topological question of Jones is reduced to a purely combinatorial question whether $\cW\not\leq_K\cH$.

%%%%%%%%%%%%%%%%%%%%%%%%%%%%%%%%%%%%%%%%%%%%%%%%%%
%%%%% SECTION
%%%%%%%%%%%%%%%%%%%%%%%%%%%%%%%%%%%%%%%%%%%%%%%%%%

\section{Mr\'{o}wka spaces are not Hindman}
\label{sec:mrowka-is-never-Hindman}

In the sequel we use $\omega$ to denote the set of all natural numbers, and we identify $n\in \omega$ with the set $\{0,1,\dots,n-1\}$ (in particular $A\setminus n = A\setminus \{0,1,\dots,n-1\}$).

We say that sets $A,B$ are \emph{almost disjoint} if $A\cap B$ is finite.

Let $\cA$ be an infinite pairwise almost disjoint family of infinite subsets of $\omega$.
Define a topological space $\Psi(\cA)$ as follows:
the underlying set of $\Psi(\cA)$ is $\omega\cup \cA$, the points of $\omega$
are isolated and a basic neighborhood of $A\in\cA$ has the form $\{A\}\cup
(A\setminus F)$, with $F$ finite. (The space $\Psi(\cA)$ was introduced in
\cite{MR63650}.)

Let $\Phi(\cA)  = \Psi(\cA)\cup \{\infty\}$ be the one-point compactification of $\Psi(\cA)$. 

If $\cA$ is a \emph{mad family on $\omega$} (i.e. infinite maximal pairwise almost disjoint family of infinite subsets of
$\omega$), then the space $\Phi(\cA)$ 
is called a \emph{Mr\'{o}wka space}.

In \cite{MR1887003}, Kojman proved that if $\cA\subseteq \Hindman$, then the Mr\'{o}wka space $\Phi(\cA)$ is not Hindman, and using this observation various spaces were shown to be not Hindman  (see e.g.~\cite{MR1950294,MR2052425,MR2471564}). Below we show that a Mr\'{o}wka space is not Hindman even without assuming that $\cA\subseteq \Hindman$.

\begin{proposition}
\label{H1}
No Mr\'{o}wka space is Hindman.
\end{proposition}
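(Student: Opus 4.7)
My plan is to show the contrapositive: assuming $\Phi(\cA)$ is Hindman, I will derive a contradiction with the maximality of $\cA$. I work with the natural sequence $x_n=n$, viewing $\omega$ as a subspace of $\Phi(\cA)$. By the Hindman hypothesis, there is an infinite $D\subseteq\omega$ and a point $x\in\Phi(\cA)$ such that $\langle n\rangle_{n\in\FS(D)}$ IP-converges to $x$. Points of $\omega$ are isolated in $\Phi(\cA)$, so $x\in\omega$ would force $\FS(D\setminus k)\subseteq\{x\}$ for some $k$, which is impossible since $\FS(D\setminus k)$ is infinite. Hence $x\in\cA$ or $x=\infty$, and I must rule out both possibilities.

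For $x=\infty$: a basic neighbourhood of $\infty$ is the complement of a compact subset of $\Psi(\cA)$, and since $\{B\}\cup B$ is compact for every $B\in\cA$ (it is homeomorphic to a convergent sequence), IP-convergence yields, for each $B\in\cA$, some $m_B$ with $\FS(D\setminus m_B)\cap B=\emptyset$. Maximality of $\cA$ applied to the infinite set $\FS(D)$ produces some $B_0\in\cA$ with $B_0\cap\FS(D)$ infinite; Hindman's theorem applied to the $2$-coloring of $\FS(D)$ by membership in $B_0$ then yields an infinite $D_1\subseteq D$ with either $\FS(D_1)\subseteq B_0$ or $\FS(D_1)\cap B_0=\emptyset$. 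The first alternative is decisive: the subsequence $\langle n\rangle_{n\in\FS(D_1)}$ has all its values in $B_0$ and hence IP-converges to $B_0\in\cA$, but it also IP-converges to $\infty$ as a refinement of the original IP-convergent subsequence, and the Hausdorffness of $\Phi(\cA)$ forces the absurd $B_0=\infty$. The second alternative invites an iteration: pick $B_1$ via maximality applied to $\FS(D_1)$, refine to $D_2$, and so on; diagonalizing the nested refinements to an infinite $D^*$ must, after a final application of maximality and Hindman's theorem to $\FS(D^*)$, force the decisive alternative.

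The case $x=A\in\cA$ is the harder one, and is where the paper improves on Kojman's earlier result. Basic neighbourhoods of $A$ force $\FS(D\setminus m)\subseteq A$ for some $m$, which under Kojman's stronger hypothesis $\cA\subseteq\Hindman$ already contradicts the non-IP-ness of $A$; without that hypothesis, $A$ may well be an IP set in $\cA$, and so a subtler argument is needed. My plan is to exploit the maximality of $\cA$ structurally: the IP set $\FS(D\setminus m)$ is contained in $A$ and is almost disjoint from every $A'\in\cA\setminus\{A\}$ (since $A\cap A'$ is finite for each such $A'$), so a further Hindman refinement inside $\FS(D\setminus m)$ combined with the maximality of $\cA$ should force a second IP-limit point for a suitable sub-IP-sequence, again contradicting Hausdorffness in the same manner as in the $\infty$ case. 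The main obstacle, and the heart of the novelty over Kojman's result, is precisely this step: converting the inclusion $\FS(D\setminus m)\subseteq A$ into a Hausdorffness contradiction without relying on $A$ being non-IP, which is where maximality of $\cA$ must be used essentially---presumably by locating another element of $\cA$ whose interaction with $\FS(D\setminus m)$, after a suitable Hindman refinement, forces the required second limit point.
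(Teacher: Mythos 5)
There is a genuine gap, and it is not the technical kind that a ``subtler argument'' could fill: your choice of test sequence is fatally flawed. To show $\Phi(\cA)$ is not Hindman you must exhibit \emph{some} sequence with no IP-convergent sub-IP-sequence, and the identity sequence simply is not such a witness in general. A mad family may contain an IP-set: take any $A_0\supseteq\FS(E)$ with infinite complement and extend (by Zorn's lemma, after splitting the complement into infinitely many infinite pieces) to an infinite mad family $\cA$. For this $\cA$ the sub-IP-sequence $\langle n\rangle_{n\in\FS(E)}$ of your identity sequence IP-converges to $A_0$, since every tail $\FS(E\setminus m)$ lies in $A_0$ and eventually avoids any finite set. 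So in the case $x=A\in\cA$ there is no contradiction to be found: once $\FS(D\setminus m)\subseteq A$, every further Hindman refinement $D'$ still satisfies $\FS(D'\setminus k)\subseteq A$ and is almost disjoint from every other member of $\cA$ and bounded away from $\infty$ by the compact set $\{A\}\cup A$, so the refined IP-sequence IP-converges to $A$ and to nothing else --- the ``second IP-limit point'' you hope maximality will produce does not exist. (A distinct sub-IP-sequence converging elsewhere would not violate Hausdorffness anyway, any more than a sequence having subsequences with different limits does.) Your $\infty$-case iteration has a parallel defect: when $\cA\subseteq\Hindman$ (Kojman's case) the ``decisive'' alternative $\FS(D_1)\subseteq B_0$ can never occur, so the iteration never terminates; though that case at least admits an easy direct fix --- apply maximality to $D$ rather than to $\FS(D)$, so that $A\cap D$ infinite gives $A\cap\FS(D\setminus m)\supseteq A\cap(D\setminus m)$ infinite for all $m$, and $\Phi(\cA)\setminus(\{A\}\cup A)$ is a neighborhood of $\infty$ witnessing non-convergence.

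The paper avoids all of this by \emph{constructing} the witnessing sequence rather than testing the identity. It fixes the partition $P_k=\{n2^{k+1}+2^k:n\in\omega\}$, proves that for every infinite $D$ some $\FS(D)\cap P_{k_0}$ is infinite while every $\FS(D)\setminus P_k$ is infinite, and then defines $f$ so that $f\restriction P_k$ is a bijection onto $A_k\setminus\bigcup_{i<k}A_i$ for distinct $A_k\in\cA$. Along any $\FS(D)$ the values of $f$ then land in some single $A_{k_0}$ infinitely often and outside every $A_k$ infinitely often, which simultaneously rules out IP-convergence to points of $\omega$, to members of $\cA$, and to $\infty$. That engineered spreading of values across at least two members of $\cA$ is exactly the ingredient your approach is missing and cannot recover from the identity sequence.
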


\begin{proof}
Fix a Mr\'{o}wka space $\Phi(\cA)$ defined by a mad family $\cA$. 

Let 
$P_k =\{n2^{k+1}+2^k: n\in \omega\}$
for each $k\in\omega$ and notice that for any infinite $D\subseteq \omega$ we have:
\begin{enumerate}
	\item 
$\FS(D)\cap P_k$ is infinite for some $k\in \omega$,
\item $\FS(D)\setminus P_k$ is infinite for each $k\in \omega$.
\end{enumerate} 
Indeed, we have two cases: $D\cap P_k$ is infinite for some $k$ or $D\cap P_k$ is finite for each $k$.

In the first case, take $k_0$ such that $D\cap P_{k_0}$ is infinite. 
Then $\FS(D)\cap P_{k_0}$ is infinite as it contains $D\cap P_{k_0}$.
Moreover
for $k\neq k_0$ we have $\FS(D)\setminus P_k$ is infinite as $P_k\cap P_{k_0}=\emptyset$ and $D\cap P_{k_0}\subseteq \FS(D)$, and 
$\FS(D)\setminus P_{k_0}$ is infinite as $a+b\in \FS(D)\setminus P_{k_0}$ for any $a,b\in D\cap P_{k_0}$.

In the second case, take $k_0$ such that $D\cap P_{k_0}\neq\emptyset$. Then 
$\FS(D)\cap P_{k_0}$ is infinite as $a+b\in \FS(D)\cap P_{k_0}$ for any $a\in D\cap P_{k_0}$ and $b\in D\cap P_l$ with $l>k_0$.
Moreover,
$\FS(D)\setminus P_{k}$ is infinite as $D\subseteq \FS(D)$ and $D\setminus P_{k}$ is infinite for any $k$.

Let $A_1,A_2,\dots$ be a sequence of distinct sets from $\cA$.
Let $f:\omega\to \Phi(\cA)$ be a sequence such that $f\restriction P_k:P_k\to A_k\setminus \bigcup_{i<k}A_i$ is a bijection for each $k\in \omega$.
We claim that $f$ has no IP-convergent IP-subsequence.

We take any infinite $D\subseteq\omega$ and show that $f\restriction \FS(D)$ is not IP-convergent to any $x\in \Phi(\cA)$.

If $x\in \omega$, then $f\restriction\FS(D)$ is not IP-convergent to $x$ as $\omega$ is discrete in $\Phi(\cA)$ and $f$ is one-to-one.  

Suppose that $x=\infty$. Take $A\in \cA$ such that $f[D]\cap A$ is infinite (there is one such $A$ since $\cA$ is a mad family) and notice that for each $n\in \omega$ the set $f[D\setminus n]\cap A$ is also infinite.
Then $U=\Phi(\cA) \setminus (\{A\} \cup A)$ is an open neighborhood of $\infty$  
such that $f[\FS(D\setminus n)]\setminus U$ is infinite for each $n$. Hence $f\restriction \FS(D)$ is not IP-convergent to $\infty$.

Suppose that $x\in \cA$. We have two cases: 
$x\neq A_k$ for any $k\in\omega$ or $x = A_{k_0}$ for some $k_0\in \omega$.

In the first case, 
$U=\{x\}\cup x$ is an open neighborhood of $x$ such that $f[\FS(D\setminus n)]\setminus U$ is infinite for each $n$ and therefore $f\restriction \FS(D)$ is not IP-convergent to $x$. 

In the second case, $U=\{A_{k_0}\}\cup (A_{k_0} \setminus \bigcup_{i<k_0} (A_i\cap A_{k_0}))$ is an open neighborhood of $x$
such that $f[\FS(D\setminus n)]\setminus U$ is infinite for each $n$ 
and therefore $f\restriction \FS(D)$ is not IP-convergent to $x$. 
\end{proof}

%%%%%%%%%%%%%%%%%%%%%%%%%%%%%%%%%%%%%%%%%%%%%%%%%%
%%%%% SECTION
%%%%%%%%%%%%%%%%%%%%%%%%%%%%%%%%%%%%%%%%%%%%%%%%%%

\section{New Hindman spaces}
\label{sec:new-Hindman-spaces}

In this section we prove our main result that under the continuum hypothesis if $\I\not\leq_K\cH$ then there is a Hindman space which is not an $\I$-space.

First we prove some lemmas we are going to use in the proof of the main theorem.

\begin{proposition}
	\label{prop:IP-limit-unique-iff-limit-unique}
	Let $X$ be a topological space.
	The following conditions are equivalent.
	\begin{enumerate}
		\item Limits of convergent sequences in $X$ are unique.
		\item IP-limits of IP-convergent IP-sequences in $X$ are unique.
	\end{enumerate}
\end{proposition}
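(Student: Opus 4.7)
The plan is to establish the two implications separately, each by producing an appropriate witness from the other kind. Throughout, the key observation I will use is that if $D \subseteq \omega$ is infinite and $m \in \omega$, then $\FS(D \setminus \{0,1,\dots,m\}) \subseteq \{m+1,m+2,\dots\}$, because any nonempty sum of distinct positive integers each exceeding $m$ is itself greater than $m$.

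For the direction $(2) \Rightarrow (1)$, I will argue contrapositively. Suppose $\langle x_n\rangle_{n\in\N}$ is a sequence in $X$ converging to two distinct points $x$ and $y$. Take any infinite $D\subseteq\N$ (for instance $D=\N$) and consider the IP-sequence $\langle y_n\rangle_{n\in\FS(D)}$ defined by $y_n = x_n$. Given any neighborhood $U$ of $x$, ordinary convergence supplies $N$ with $x_n\in U$ for $n\geq N$; choosing $m=N$ and using the key observation above, every $n\in\FS(D\setminus\{0,\dots,m\})$ satisfies $n>m\geq N$, so $y_n=x_n\in U$. Hence this IP-sequence IP-converges to $x$, and by the identical argument also to $y$, violating (2).

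For the direction $(1) \Rightarrow (2)$, again contrapositively: suppose $\langle x_n\rangle_{n\in\FS(D)}$ is an IP-sequence IP-converging to two distinct points $x$ and $y$. Enumerate $D=\{d_0<d_1<d_2<\dots\}$ and consider the ordinary subsequence $\langle x_{d_k}\rangle_{k\in\N}$. For any neighborhood $U$ of $x$, IP-convergence provides $m$ with $x_n\in U$ for all $n\in\FS(D\setminus\{0,\dots,m\})$; since $d_k>m$ for all but finitely many $k$, and each such $d_k$ lies in $\FS(D\setminus\{0,\dots,m\})$ as a one-term sum, we get $x_{d_k}\in U$ eventually. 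Thus $\langle x_{d_k}\rangle$ is an ordinary convergent sequence with limit $x$, and the same argument gives limit $y$, violating (1).

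There is essentially no obstacle in either direction; the proposition is a book-keeping exercise once one notices that restricting an ordinary sequence to indices in $\FS(D)$ produces an IP-sequence with the same limit points, while restricting an IP-sequence to the singleton sums $\{d_k\}_{k\in\N}$ recovers an ordinary sequence with the same IP-limits. The only point requiring mild care is the index-shift observation about $\FS(D\setminus\{0,\dots,m\})$, which I would state once at the outset and then apply in both halves of the argument.
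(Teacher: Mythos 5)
Your proof is correct, and while your $(1)\Rightarrow(2)$ direction coincides with the paper's (restricting the IP-sequence to the singleton sums, i.e.\ to the indices in $D$ itself, yields an ordinary sequence inheriting every IP-limit as a limit), your $(2)\Rightarrow(1)$ direction is genuinely different and simpler. The paper's argument there builds an auxiliary sequence that is constant on the blocks $P_k=\{n2^{k+1}+2^k:n\in\omega\}$ and takes $E=\{2^k:k\in\omega\}$, so that for $n\in\FS(E\setminus m)$ the least power of $2$ in the binary expansion of $n$ determines which term $x_k$ (with $k$ large) appears; you instead observe that with $D=\N$ the set $\FS(D\setminus\{0,\dots,m\})$ is contained in the tail $\{m+1,m+2,\dots\}$, so the original sequence itself, re-indexed by $\FS(\N)$, is already an IP-sequence that IP-converges to every limit of the ordinary sequence. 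Your route saves the construction entirely at the cost of using the somewhat degenerate index set $D=\N$ (which is perfectly legitimate under the paper's definition of an IP-sequence, since only infinitude of $D$ is required); the paper's construction produces a witness indexed by $\FS(E)$ for a sparse $E$, which is closer in spirit to how IP-sequences are used elsewhere in the paper but is not needed for this proposition. The one point you rightly flag -- that every element of $\FS(D\setminus\{0,\dots,m\})$ exceeds $m$ because it is a nonempty sum of distinct integers each exceeding $m$ -- is exactly the observation that makes the shortcut work.
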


\begin{proof}
	$(1)\implies (2)$
	Follows from the fact that if $\langle x_n\rangle_{n\in\FS(D)}$ is an IP-sequence IP-convergent to  $x$ then $\langle x_n\rangle_{n \in D}$ is a sequence convergent to $x$.

	$(2)\implies (1)$
	Suppose that there is a sequence $x_n\in X$ with two distinct limits $x,y\in X$.
	Let 
	$P_k =\{n2^{k+1}+2^k: n\in \omega\}$
	for each $k\in\omega$. 
	Now we define a sequence $y_n\in X$ by
	$y_n = x_k \iff n\in P_k$.
	Let $E=\{2^k:k\in\omega\}$. Then it is not difficult to see that 
	$\langle y_n\rangle_{n\in \FS(E)}$ is IP-convergent to both $x$ and $y$.
\end{proof}

An infinite set $D\subseteq\omega$ is \emph{sparse} if for every $x\in \FS(D)$ there is a unique nonempty finite set $F\subseteq D$ with $x=\sum_{i\in F}i$.
 A sparse set $D$ is \emph{very sparse} if additionally for each $F,F'\in[D]^{<\omega}$, if $F\cap F'\neq\emptyset$ then $\sum_{i\in F}i+\sum_{i\in F'}i\notin \FS(D)$. 
	(The term a ``sparse set'' was coined by Kojman in \cite{MR1887003}, however the sparsity of sets was used earlier as this notion is crucial in Hindman's original proof~\cite{MR349574}. The same notion has been recently renamed ``apartness'' in some other contexts~\cite{MR3731714}.)

\begin{lemma}[{Essentially~\cite[Lemma~2.3]{MR307926}}]
\label{verysparse}
For every infinite $D$ there is an infinite very sparse $D'\subseteq D$.
\end{lemma}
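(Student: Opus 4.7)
The plan is to build $D'=\{d_0<d_1<d_2<\cdots\}\subseteq D$ recursively, choosing at each stage $d_n\in D$ so large that $d_n>3(d_0+d_1+\cdots+d_{n-1})$; this is possible because $D$ is infinite. Equivalently, for every $d\in D'$, the sum of the strictly smaller elements of $D'$ is below $d/3$. A standard rapid-growth argument then shows $D'$ is sparse: for any nonempty $F\in[D']^{<\omega}$ with $s=\sum_{i\in F}i$, the maximum $m$ of $F$ is the unique element of $D'$ satisfying $m\leq s<(4/3)m$, and iterating on $s-m$ recovers the rest of $F$.

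For very sparsity, I would argue by contradiction and minimality. Suppose there exist nonempty $F,F'\in[D']^{<\omega}$ with $F\cap F'\neq\emptyset$ and $\sum_{i\in F}i+\sum_{i\in F'}i=\sum_{i\in K}i$ for some nonempty $K\in[D']^{<\omega}$. Setting $G=F\cap F'$ and $H=(F\cup F')\setminus G$, we have $G\neq\emptyset$, $G\cap H=\emptyset$, and the equation reads $2\sum_{i\in G}i+\sum_{i\in H}i=\sum_{i\in K}i$. Among all triples $(G,H,K)$ of finite subsets of $D'$ satisfying these conditions, fix one minimizing $M=\max(G\cup H\cup K)$.

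The argument then splits into four cases depending on where $M$ lies, using that $G\cap H=\emptyset$. If $M\in G$, the left-hand side is at least $2M$ while the right-hand side is at most $M+\sum_{d\in D',\,d<M}d<(4/3)M$. If $M\in K\setminus(G\cup H)$, the left-hand side is at most $3\sum_{d\in D',\,d<M}d<M$ whereas the right-hand side is at least $M$. If $M\in H\setminus K$, the left-hand side exceeds $M$ while the right-hand side is below $M/3$. Finally, if $M\in H\cap K$, cancelling $M$ from both sides yields a strictly smaller triple $(G,H\setminus\{M\},K\setminus\{M\})$ in which $G$ is unchanged and still nonempty (and $K\setminus\{M\}$ is forced to be nonempty since $2\sum_{i\in G}i>0$), contradicting the minimality of $M$. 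The main obstacle is calibrating the growth rate $d_n>3\sum_{i<n}d_i$ so that the three direct-estimate subcases all close simultaneously, and recognising that the $H\cap K$ subcase must be handled via the minimality reduction rather than a direct size estimate.
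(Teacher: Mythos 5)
Your proposal is correct and follows essentially the same route as the paper: both extract a superincreasing subsequence from $D$ (each $d_n$ exceeding a fixed multiple of $\sum_{i<n}d_i$) and then verify sparsity and very sparsity by locating the largest element involved and playing it off against the sum of all smaller elements of $D'$. The only difference is organizational: the paper runs an explicit induction on $n$ using the decomposition $\FS(\{d_i:i\le n\})=\{d_n\}\cup\FS(\{d_i:i<n\})\cup(d_n+\FS(\{d_i:i<n\}))$ with growth constant $2$, whereas you encode the alleged counterexample as $2\sum_{i\in G}i+\sum_{i\in H}i=\sum_{i\in K}i$ and run a minimal-counterexample descent on $\max(G\cup H\cup K)$ with constant $3$; your four-case analysis is exhaustive (since $G\cap H=\emptyset$) and all the estimates close.
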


\begin{proof}
Let $D$ be infinite. Inductively pick $d_i\in D$ in such a way that $d_n>2\sum_{i<n}d_i$.
Define $D'=\{d_i:i\in\omega\}$. Obviously, $D'\subseteq D$. Moreover, $D'$ is sparse. Indeed, if 
$x\in \FS(D)$ and $F,F'\in[D]^{<\omega}$ are such that $x=\sum_{i\in F}i=\sum_{i\in F'}i$, then $\sum_{i\in F\setminus F'}i=\sum_{i\in F'\setminus F}i$. Assume to the contrary that $F\neq F'$. Since $(F\setminus F')\cap(F'\setminus F)=\emptyset$, without loss of generality we may assume that $d_n=\max(F\setminus F')>\max(F'\setminus F)$. Observe that $\sum_{i\in F'\setminus F}i\leq \sum_{i<n}d_i<d_n\leq\sum_{i\in F\setminus F'}i$. Thus, $F=F'$.

Finally, we show inductively that $D'$ is very sparse. For the first inductive step note that if $F,F'\subseteq\{d_0\}$ and $F\cap F'\neq\emptyset$ then $F=F'=\{d_0\}$ and $\sum_{i\in F}i+\sum_{i\in F'}i=2d_0<d_1$. Thus, $\sum_{i\in F}i+\sum_{i\in F'}\notin \FS(D')$. Assume now that for some $n\in\omega$, if $F,F'\subseteq\{d_i:i<n\}$ and $F\cap F'\neq\emptyset$ then $\sum_{i\in F}i+\sum_{i\in F'}\notin \FS(D')$. Fix $F,F'\subseteq\{d_i:i\leq n\}$ such that $F\cap F'\neq\emptyset$. 

There are two possibilities: either $\max(F\cup F')<d_n$ and then $\sum_{i\in F}i+\sum_{i\in F'}i\notin \FS(D')$ by the inductive assumption, or $\max(F\cup F')=d_n$.
In the latter case note that $\sum_{i\in F}i+\sum_{i\in F'}i\leq 2\sum_{i\leq n}d_i<d_{n+1}$.
Moreover, if $d_n\in F\cap F'$ then $\sum_{i\in F}i+\sum_{i\in F'}i\geq 2d_n>\sum_{i\leq n}d_i$ and thus $\sum_{i\in F}i+\sum_{i\in F'}i\notin \FS(D')$.
On the other hand, if $d_n\in F\setminus F'$, then suppose to the contrary that $\sum_{i\in F}i+\sum_{i\in F'}i\in \FS(D')$.
In this case, since $\sum_{i\in F}i+\sum_{i\in F'}i<d_{n+1}$,
$\sum_{i\in F}i+\sum_{i\in F'}i\in FS(\{d_i:i\leq n\})=\{d_n\}\cup FS(\{d_i:i<n\})\cup (d_n+FS(\{d_i:i<n\}))$. 
As $d_n>2\sum_{i<n}d_i$ and $d_n\in F\setminus F'$, we get that $\sum_{i\in F}i+\sum_{i\in F'}i\in d_n+FS(\{d_i:i<n\})$ i.e. $\sum_{i\in F\setminus\{d_n\}}i+\sum_{i\in F'}i\in \FS(\{d_i:i<n\})$.
But this contradicts the inductive assumption. Thus, $D'$ is very sparse.
\end{proof}

\begin{lemma}
\label{FS1}
Let $(D_i)\subseteq[\omega]^\omega$ be such that $D_0$ is very sparse and $\FS(D_0)\supseteq \FS(D_1)\supseteq\ldots$. Then there is an infinite $E$ such that $\FS(E)\subseteq \FS(D_0)$ and for each $n\in\omega$ there is $k\in\omega$ with $\FS(E\setminus k)\subseteq \FS(D_n)$.
\end{lemma}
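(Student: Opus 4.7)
The plan is to exploit the very sparseness of $D_0$ to impose a nested partition structure on $D_0$ that tracks the chain $\FS(D_0)\supseteq\FS(D_1)\supseteq\cdots$, and then to construct $E$ by a diagonalization against these partitions.

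First, I would observe that since $D_0$ is very sparse, each $x\in\FS(D_0)$ has a unique representation $x=\sum B_x$ with $B_x\subseteq D_0$ finite. Applying this to each element of $D_n\subseteq\FS(D_n)\subseteq\FS(D_0)$ yields a map $d\mapsto B^n_d$ on $D_n$, and I would verify two facts: (i) for distinct $d,d'\in D_n$, the blocks $B^n_d$ and $B^n_{d'}$ are disjoint --- this uses the second clause of very sparseness applied to $d+d'\in\FS(D_n)\subseteq\FS(D_0)$; and (ii) each block $B^{n+1}_d$ is a union of $\Pi_n$-blocks, which follows by expanding $d=\sum_{j\in G_d}j$ with $G_d\subseteq D_n$, rewriting each $j=\sum B^n_j$, and invoking uniqueness of the $D_0$-representation. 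This produces nested partitions $\Pi_0,\Pi_1,\ldots$, where $\Pi_n=\{B^n_d:d\in D_n\}$ partitions some $S_n\subseteq D_0$ into infinitely many finite blocks and $\Pi_{n+1}$ coarsens $\Pi_n$.

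Next I would inductively pick a block $F_n\in\Pi_n$ with $\min F_n>\max(F_0\cup\cdots\cup F_{n-1})$, which is possible because $\Pi_n$ has infinitely many pairwise disjoint finite blocks and only finitely many of them can meet any fixed initial segment of $\omega$; then set $e_n:=\sum F_n$ and $E:=\{e_n:n\in\omega\}$. Since the $F_n$'s are pairwise disjoint subsets of $D_0$, any finite sum $\sum_{i\in T}e_i$ equals $\sum\bigsqcup_{i\in T}F_i\in\FS(D_0)$, giving $\FS(E)\subseteq\FS(D_0)$. For the second condition, fix $n$ and take $k$ large enough that $E\setminus k=\{e_i:i\geq n\}$; for $i\geq n$ the block $F_i\in\Pi_i$ is, by iterated coarsening, a disjoint union of $\Pi_n$-blocks, so $\bigsqcup_{i\in T}F_i$ is a disjoint union of distinct $\Pi_n$-blocks $B^n_{j_1},\ldots,B^n_{j_r}$, giving $\sum_{i\in T}e_i=j_1+\cdots+j_r\in\FS(D_n)$.

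The main obstacle I anticipate is pinning down the nested partition structure in step one: we are only told that $D_0$ (and none of the $D_n$ with $n\geq 1$) is very sparse, so every uniqueness argument must be routed through the $D_0$-representation rather than through potentially-nonunique representations inside intermediate $D_n$'s. Once this structural observation is secured, the diagonalization and the verification amount to routine bookkeeping through the refinement hierarchy.
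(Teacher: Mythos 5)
Your proposal is correct and follows essentially the same route as the paper's proof: both arguments hinge on the unique $D_0$-representation $x\mapsto F(x)$, choose the $e_n$ so that their $D_0$-supports are pairwise disjoint, and verify $\FS(E\setminus k)\subseteq\FS(D_n)$ by decomposing each $e_i$ ($i\geq n$) into elements of $D_n$ whose supports are again pairwise disjoint. The only cosmetic difference is the selection step: you pick $e_n\in D_n$ as the sum of a sufficiently late block of the nested partition, while the paper picks $e_n\in\FS(D_n)$ by noting that the set of points whose support meets $F(e_0)\cup\dots\cup F(e_{n-1})$ lies in $\Hindman\restriction\FS(D_0)$.
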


\begin{proof}
Since $D_0$ is very sparse, for each $x\in \FS(D_0)$ there is a unique $F(x)\in[D_0]^{<\omega}$ with $x=\sum_{i\in F(x)}i$. Let $e_0\in D_0$, $e_1\in \FS(D_1)\setminus\{x\in \FS(D_0): F(x)\cap F(e_0)\neq\emptyset\}$ and $e_n\in \FS(D_n)\setminus\{x\in \FS(D_0): (\exists i<n)\, F(x)\cap F(e_i)\neq\emptyset\}$ for all $n>1$. This is possible as $\{x\in \FS(D_0): (\exists i<n)\, F(x)\cap F(e_i)\neq\emptyset\}\in\Hindman \restriction \FS(D_0)$ for each $n\in\omega$.

Define $E=\{e_i:i\in\omega\}$. Then $E$ is infinite. We need to check that $E$ is as needed. 

Let $x\in \FS(E)$. Then there is a finite $F\subseteq \omega$ such that $x=\sum_{i\in F}e_i$. As $F(e_i)$ are pairwise disjoint, we get that $x\in \FS(D_0)$.

Fix $n\in\omega$ and let $x\in \FS(E\setminus e_n)$. Then there is a finite $F\subseteq \omega\setminus n$ such that $x=\sum_{i\in F}e_i$. Recall that each $e_i$ for $i\geq n$ is in $\FS(D_n)$. Thus, for each $i\in F$ we can find finite $T_i\subseteq D_n$ with 
$e_i=\sum_{d\in T_i}d$. Observe that $F(d)\cap F(d')=\emptyset$ for all $d,d'\in D_n$ as otherwise we would get $d+d'\in \FS(D_n)\setminus \FS(D_0)$ which contradicts $\FS(D_0)\supseteq \FS(D_n)$. We claim that the sets $T_i$ are pairwise disjoint. Indeed, if $d\in T_i\cap T_j\subseteq D_n$ for some $i,j\in F$ then $F(d)\subseteq \bigcup_{d'\in T_i}F(d')=F(e_i)$ and $F(d)\subseteq \bigcup_{d'\in T_j}F(d')=F(e_j)$ (as $F(x)$ are unique). This leads to $F(e_i)\cap F(e_j)\neq\emptyset$ which implies $e_i+e_j\notin \FS(D_0)$ and contradicts the fact that $F(e_i)$ are pairwise disjoint. Since the sets $T_i\in[D_n]^{<\omega}$ are pairwise disjoint, $x=\sum_{i\in F}\sum_{d\in T_i}d\in \FS(D_n)$. 
\end{proof}

\begin{lemma}
\label{FS}
Let $D\subseteq\omega$ be infinite and $A_n \in \Hindman$ for each $n\in \omega$. Then there is an infinite $D'$ such that $\FS(D')\subseteq \FS(D)$ and for each $n\in\omega$ there is $k\in\omega$ with $\FS(D'\setminus k)\cap A_n=\emptyset$.
\end{lemma}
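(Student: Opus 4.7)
The plan is to combine Lemmas~\ref{verysparse} and~\ref{FS1} with the partition-regular form of Hindman's theorem (every finite partition of an IP-set has an IP cell). First, by Lemma~\ref{verysparse} choose a very sparse $D_0\subseteq D$, so $\FS(D_0)\subseteq \FS(D)$.

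Next, I would recursively pick infinite $D_{n+1}$ with $\FS(D_{n+1})\subseteq \FS(D_n)$ and $\FS(D_{n+1})\cap A_n=\emptyset$. Indeed, $\FS(D_n)$ is an IP-set, and partitioning it as $\FS(D_n)=(A_n\cap \FS(D_n))\cup(\FS(D_n)\setminus A_n)$, Hindman's theorem ensures one of the two cells is again IP. The first cell lies in $A_n\in\Hindman$ and so is not IP; hence the second is IP, and we may take $D_{n+1}$ to be any infinite witness, that is $\FS(D_{n+1})\subseteq \FS(D_n)\setminus A_n$. Note that $D_{n+1}$ need not be a subset of $D_n$; only the $\FS$-containment is used in the sequel.

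Finally, I would apply Lemma~\ref{FS1} to the sequence $(D_n)_{n\in\omega}$; its hypotheses hold because $D_0$ is very sparse and $\FS(D_0)\supseteq \FS(D_1)\supseteq\cdots$. The lemma yields an infinite $E$ with $\FS(E)\subseteq \FS(D_0)\subseteq \FS(D)$ and, for each $n\in\omega$, some $k_n\in\omega$ with $\FS(E\setminus k_n)\subseteq \FS(D_n)$. Setting $D'=E$ and, for each $n$, $k:=k_{n+1}$, we get $\FS(D'\setminus k)\subseteq \FS(D_{n+1})\subseteq\omega\setminus A_n$, which is the required disjointness.

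The only substantive step is the recursive construction: it uses the partition-regular ``relative'' form of Hindman's theorem, asserting that any finite partition of $\FS(D_n)$ has an IP cell. This is a standard strengthening of the usual Hindman theorem (transparent via the Galvin--Glazer argument with idempotents in a closed subsemigroup of $\beta\omega$ concentrated on $\FS(D_n)$), and is the one nontrivial input; everything else is bookkeeping using Lemmas~\ref{verysparse} and~\ref{FS1}.
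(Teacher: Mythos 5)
Your proof is correct and follows essentially the same route as the paper's: refine $D$ to a very sparse set via Lemma~\ref{verysparse}, recursively choose $D_{n+1}$ with $\FS(D_{n+1})\subseteq\FS(D_n)\setminus A_n$ (which is exactly the fact that $\Hindman$ is an ideal, already granted in the paper), and then diagonalize with Lemma~\ref{FS1}. The only differences are cosmetic (index shift, and taking $D'=E$ rather than a tail of $E$), neither of which affects correctness.
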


\begin{proof}
By Lemma~\ref{verysparse}, without loss of generality we may assume that $D$ is very sparse. Since $A_0\in\Hindman$, the set $\FS(D)\setminus A_0$ contains $\FS(D_0)$ for some infinite $D_0\subseteq\omega$. Similarly, the set $\FS(D_0)\setminus A_1$ contains $\FS(D_1)$ for some infinite $D_1\subseteq\omega$. Thus, we can inductively define a sequence of sets $D_n \in [\omega]^\omega$ such that $\FS(D)\supseteq \FS(D_0)\supseteq \FS(D_{1})\supseteq\ldots$ and $\FS(D_i)\cap A_i=\emptyset$ for all $i\in\omega$. Applying Lemma \ref{FS1} we get an infinite $E$ such that for each $n\in\omega$ there is $k\in\omega$ with $\FS(E\setminus k)\subseteq \FS(D_n)$. Let $k_0$ be such that $\FS(E\setminus k_0)\subseteq \FS(D_0)$ and define $D'=E\setminus k_0$. Then $\FS(D')=\FS(E\setminus k_0)\subseteq \FS(D_0)\subseteq \FS(D)$. Moreover, given $n\in\omega$, as there is $k\in\omega$ with $\FS(E\setminus k)\subseteq \FS(D_n)$, we have that $\FS(D'\setminus k)\subseteq \FS(E\setminus k)\subseteq \FS(D_n)$, so $\FS(D'\setminus k)\cap A_n=\emptyset$.
\end{proof}

Finally, we are ready for the main result of this section.
To some extent, the idea of the proof of the following theorem shares some similarity with Kojman-Shelah's construction of van der Waerden space which is not Hindman (\cite{MR1950294}).

\begin{theorem}
\label{THM}
Assume the continuum hypothesis. If $\I\not\leq_K\Hindman$,
then there is a $T_1$  separable Hindman space which is not an $\I$-space.  Moreover, limits of convergent sequences and IP-limits of IP-convergent IP-sequences in this space  are unique.
\end{theorem}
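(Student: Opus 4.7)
The plan is a transfinite recursion of length $\omega_1$ under CH, constructing $X=\omega\cup\{x_\alpha:\alpha<\omega_1\}$ with $\omega$ open, discrete, and dense. At stage $\alpha$, the point $x_\alpha$ receives a decreasing neighborhood basis $\{x_\alpha\}\cup F^\alpha_k$ for $k\in\omega$. The inductive invariants I maintain are: (a) $F^\alpha_k\cap\omega\subseteq A_\alpha\in\I$ for some fixed $A_\alpha$; (b) for every pair $\alpha\neq\beta$ there are $k,k'$ with $F^\alpha_k\cap F^\beta_{k'}\cap\omega=\emptyset$; and (c) every sequence in $X$ enumerated by stage $\gamma$ has an IP-convergent IP-subsequence with limit in $\{x_\beta:\beta\leq\gamma\}$. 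Invariant (a) says any $B\subseteq\omega$ converging to $x_\alpha$ is almost contained in $A_\alpha\in\I$, so that the identity sequence on $\omega$ witnesses that $X$ is not an $\I$-space. Invariant (b) gives unique sequential limits among sequences in $\omega$, which by Proposition~\ref{prop:IP-limit-unique-iff-limit-unique} yields unique IP-limits. Invariant (c) is the Hindman property.

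Under CH, bookkeeping enumerates the sequences to be handled as $(s_\gamma)_{\gamma<\omega_1}$. At stage $\gamma$, after reducing to the main case $s_\gamma\colon\omega\to\omega$ injective (with $\{x_\beta\}$-valued sequences handled by density as discussed below), I apply the hypothesis $\I\not\leq_K\cH$ with $f=s_\gamma$: there exists $A\in\I$ with $f^{-1}[A]\notin\cH$, so $\FS(E_0)\subseteq f^{-1}[A]$ for some infinite $E_0$, which I may take very sparse by Lemma~\ref{verysparse}. Provided every $f^{-1}[A_\beta]$ lies in $\cH$ for $\beta<\gamma$ (else I re-route as discussed below), Lemma~\ref{FS} produces $E\subseteq E_0$ such that for each $\beta<\gamma$ there is $k_\beta$ with $\FS(E\setminus k_\beta)\cap f^{-1}[A_\beta]=\emptyset$. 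Setting $A_\gamma=f[\FS(E)]\subseteq A$ and $F^\gamma_k\cap\omega=f[\FS(E\setminus k)]$, the sequence $s_\gamma\restriction\FS(E)$ IP-converges to $x_\gamma$, invariant (a) holds because $A_\gamma\subseteq A\in\I$, and invariant (b) holds because $F^\gamma_{k_\beta}\cap F^\beta_0\cap\omega=f[\FS(E\setminus k_\beta)]\cap A_\beta=\emptyset$.

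The main obstacle is twofold. First, sequences $s_\gamma$ valued in $\{x_\beta:\beta<\gamma\}$ with pairwise distinct values cannot IP-converge to any $x_\alpha$ via the $\omega$-trace of the filter; I plan to handle these by picking traces $t_n\in F^{\beta_n}_n\cap\omega$, reducing to the $\omega$-case applied to $\langle t_n\rangle$ to obtain the IP-limit $x_\gamma$, and enlarging the $\{x_\beta\}$-part of $F^\gamma_k$ to contain $\{x_{\beta_n}:n\in\FS(E\setminus k)\}$; these augmentations do not affect the $\omega$-traces, so invariants (a) and (b) are preserved. Second, applicability of Lemma~\ref{FS} demands each $f^{-1}[A_\beta]$ lie in $\cH$, and when this fails for some $\beta$, the sequence $s_\gamma$ already carries an IP-set into $A_\beta$; re-routing to $x_\beta$ then requires refining the underlying IP-set further via Lemma~\ref{FS1} so that the image lands in each generator $F^\beta_k$, giving genuine IP-convergence to $x_\beta$. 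Coordinating these two reductions with the bookkeeping, while preserving the invariants, is the principal technical challenge.
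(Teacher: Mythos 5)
There is a genuine gap, and it sits exactly where you flag ``the principal technical challenge.'' The paper does not make later non-isolated points limits of earlier ones; instead it adds a single extra point $\infty$ whose basic neighborhoods are cofinite unions $\{\infty\}\cup\bigcup_{\alpha\in T\setminus F}U_\alpha$ of basic neighborhoods of the other non-isolated points, and every sequence that spreads over infinitely many non-isolated points is IP-rerouted to $\infty$. Your substitute --- enlarging $F^\gamma_k$ to contain $\{x_{\beta_n}:n\in\FS(E\setminus k)\}$ --- is incompatible with your invariant~(a). For $\{x_\gamma\}\cup F^\gamma_k$ to be a neighborhood of $x_\gamma$ in the generated topology, the coherence condition for neighborhood systems forces $F^\gamma_k$ to contain a basic neighborhood of each of the infinitely many $x_{\beta_n}$ it contains, so $F^\gamma_k\cap\omega$ must contain $\bigcup_{n}\bigl(F^{\beta_n}_{j_n}\cap\omega\bigr)$ for infinitely many $n$; a union of infinitely many members of $\I$ need not lie in $\I$ for a general ideal, so you cannot keep $F^\gamma_k\cap\omega\subseteq A_\gamma\in\I$. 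Your claim that ``these augmentations do not affect the $\omega$-traces'' is therefore false, and with it the whole ``not an $\I$-space'' argument collapses at every higher-level point. This is not a side issue: showing that no $\I$-positive subsequence of the identity converges to the accumulation point of the non-isolated points is the hardest part of the paper's proof. It is handled there by carrying the dichotomy (W1)/(W2) through the recursion (every $D_\alpha$ is either eventually separated from, or eventually absorbed into, each earlier $D_\beta$) and then, given $B\notin\I$, feeding the increasing enumeration of $B$ back into the bookkeeping as some $f_\alpha$ to manufacture an infinite $C\subseteq B$ missed by a neighborhood of $\infty$. Nothing in your proposal plays this role, and invariant~(b), which only controls $\omega$-traces pairwise, also does not yield uniqueness of limits for sequences valued in $\{x_\beta:\beta<\omega_1\}$ (in the paper this is again exactly the (W1)-based separation of $\FS(D_\alpha)$ from $\infty$).

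A secondary problem is your rerouting dichotomy. Whether $s_\gamma^{-1}[A_\beta]\in\cH$ is the wrong test: if $s_\gamma^{-1}[A_\beta]\notin\cH$, the witnessing IP-set $\FS(D)$ with $s_\gamma[\FS(D)]\subseteq A_\beta$ need not admit a further refinement converging to $x_\beta$, because convergence requires $\FS(D)\setminus s_\gamma^{-1}\bigl[f_\beta[\FS(E_\beta\setminus k)]\bigr]$ to be small \emph{relative to} $\FS(D)$ for every $k$, and containment in $A_\beta$ gives no control over these smaller tail sets. The paper's conditions (P1)/(P2) are formulated precisely as this relative dichotomy (for every IP-subset some tail-difference is IP-large, versus some IP-subset all of whose tail-differences are $\cH$-small), and that is what makes both the rerouting in case (W2) and the separation in case (W1) go through. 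As written, your case split leaves a middle case uncovered and, combined with the missing $\infty$, the proposal does not constitute a proof.
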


\begin{proof}
Using the  continuum hypothesis, we only need countable ordinals to fix a list $\langle (H_\alpha,f_\alpha):\alpha<\omega_1\rangle$ of all sets $H_\alpha\notin\Hindman$ and functions $f_\alpha:H_\alpha\to\omega$ satisfying $f_\alpha^{-1}[\{n\}]\in\Hindman$ for all $n\in\omega$. 

We will construct a sequence $\langle D_\alpha:\alpha<\omega_1\rangle$ of infinite subsets of $\omega$ such that for every $\alpha<\omega_1$ 
we have $\FS(D_\alpha)\subseteq H_\alpha$
and $f_\alpha[\FS(D_\alpha)]\in\I$
 and 
one of the following conditions holds:
\begin{equation}
\label{eq:W1}
\tag{W1} \forall {\beta<\alpha}  \, \forall {i\in\omega} \, \exists {k\in \omega} \, \left(f_\alpha[\FS(D_\alpha\setminus k) ]\cap (i\cup f_\beta[\FS(D_\beta\setminus k)])=\emptyset\right)
\end{equation}
or
\begin{equation}
\label{eq:W2}
\tag{W2} \exists {\beta<\alpha} \, \forall { k\in\omega} \,  \exists {n\in \omega} \,  \,\left(f_\alpha[\FS(D_\alpha\setminus n) ]\subseteq f_\beta[\FS(D_\beta\setminus k)]\setminus k\right).
\end{equation}

Suppose that $\alpha<\omega_1$ and that $D_\beta$ have been chosen for all $\beta<\alpha$ . Since $\Hindman$ is homogeneous (see \cite[Example~2.6]{MR3594409}) and $\I\not\leq_K\Hindman$, there is $C\in\I$ such that $f^{-1}_\alpha[C]\notin\Hindman \restriction H_\alpha$. 
Find infinite $D\subseteq\omega$ with $\FS(D)\subseteq H_\alpha$ and $f_\alpha[\FS(D)]\subseteq C$. 
By Lemma \ref{verysparse} we can assume that $D$ is very sparse.

We have 2 cases:
\begin{equation}
\label{eq:P1}
\tag{P1}
\forall {D'\in[\omega]^\omega}\, \forall {\beta<\alpha} \, \left(\FS(D')\subseteq \FS(D)  \implies  \exists {k\in\omega}\, \left( \FS(D')\setminus 
f_\alpha^{-1}[
f_\beta[\FS(D_\beta\setminus k)]]\notin \Hindman\right)\right)
\end{equation}
or
\begin{equation}
\label{eq:P2}
\tag{P2}
\exists {D'\in[\omega]^\omega} \, \exists {\beta<\alpha} \, \left(\FS(D')\subseteq \FS(D)  \land \forall {k\in\omega}\, \left( \FS(D')\setminus f_\alpha^{-1}[f_\beta[\FS(D_\beta\setminus k)]]\in \Hindman\right)\right).
\end{equation}

In the first case, let $\alpha\times \omega =\{(\beta_n,i_n):n<\omega\}$, taking into account that $\alpha$ is countable. 
Using condition \eqref{eq:P1} repeatedly  
and the fact that $f_\alpha^{-1}[i]\in\Hindman$ for every $i\in\omega$, 
one can easily construct a sequence $\langle E_n: n\in\omega\rangle$ of infinite subsets of $\omega$ such that 
\begin{enumerate}
	\item $\FS(E_0)\subseteq \FS(D)$,
	\item $\forall {n\in \omega}\, (\FS(E_{n+1})\subseteq \FS(E_{n}))$,
	\item $\forall {n\in \omega} \, \exists {k\in \omega} \left(\FS(E_n) \cap  f_\alpha^{-1}[i_n\cup f_{\beta_n}[\FS(E_{\beta_n}\setminus k)]] = \emptyset\right)$.
\end{enumerate}

Now using Lemma~\ref{FS1} we find an infinite set $E'\subseteq \omega$ such that 
$\FS(E')\subseteq\FS(D)$
and for every  $n\in\omega$ there is $l\in\omega$ with $\FS(E'\setminus l) \subseteq \FS(E_n)$.

It is not difficult to see that $D_\alpha = E'$ satisfies $\FS(D_\alpha)\subseteq H_\alpha$ and  \eqref{eq:W1}.

Consider the second case.
Let $D'\in[\omega]^\omega$ and $\beta<\alpha$ be such that 
$\FS(D')\subseteq \FS(D)$ and $\FS(D')\setminus f_\alpha^{-1}[f_\beta[\FS(D_\beta\setminus k)]]\in \Hindman$ for each $k\in\omega$. 
Since $f_\alpha^{-1}[i]\in\Hindman$ for every $i\in\omega$,
we also have 
$\FS(D')\setminus f_\alpha^{-1}[f_\beta[\FS(D_\beta\setminus k)]\setminus k]\in \Hindman$ for each $k\in\omega$. 
Using Lemma~\ref{FS}, we find an infinite set $D''\subseteq \omega$
such that $\FS(D'')\subseteq \FS(D')$ and for every $k\in\omega$ there is $n\in\omega$ with 
$\FS(D''\setminus n)\cap (\FS(D')\setminus f_\alpha^{-1}[f_\beta[\FS(D_\beta\setminus k)]\setminus k]) = \emptyset $.

It is not difficult to see that $D_\alpha = D''$ satisfies $\FS(D_\alpha)\subseteq H_\alpha$ and  \eqref{eq:W2}.

The construction of sets $D_\alpha$ is finished.

We are ready to define the required space. Let 
$T=\{\alpha<\omega_1:\text{$D_\alpha$ satisfies \eqref{eq:W1}}\}$
 and $$X=\omega\cup\{\FS(D_\alpha):\alpha\in T\}\cup\{\infty\}.$$ 
For every $x\in X$ we define the family $\cB(x)\subseteq \cP(X)$ as follows:
\begin{itemize}
	\item 
$\cB(n)=\left\{\{n\}\right\}$ for $n\in \omega$,
\item $\cB(\FS(D_\alpha))=\left\{\{\FS(D_\alpha)\}\cup f_\alpha[\FS(D_\alpha\setminus k)]\setminus k:k\in\omega\right\}$  for $\alpha\in T$,
\item $\cB(\infty)=\left\{\{\infty\}\cup\bigcup_{\alpha\in T\setminus F}U_\alpha : F\in[T]^{<\omega} \land U_\alpha\in\cB(FS(D_\alpha)) \text{ for $\alpha\in T\setminus F$}\right\}$.
\end{itemize}
It is not difficult to check that 
the family $\cN = \{\mathcal{B}(x):x\in X\}$ 
satisfies:
\begin{itemize}
	\item for each $x\in X$ we have $\mathcal{B}(x)\neq\emptyset$ and if $U\in\mathcal{B}(x)$ then $x\in U$;
	\item for each $x\in X$, if $U,U'\in\mathcal{B}(x)$ then there is $V\in\mathcal{B}(x)$ with $V\subseteq U\cap U'$;
	\item if $x,y\in X$ are such that $y\in U$ for some $U\in\mathcal{B}(x)$ then there is $V\in\mathcal{B}(y)$ with $V\subseteq U$.
\end{itemize}
Hence $\cN$ is a  neighborhood system  (see e.g.~\cite[Proposition 1.2.3]{MR1039321}).
We claim that $X$ with the topology generated by $\cN$  is a topological space we are looking for.

First, observe that $X$ is separable as $\omega$ is a countable dense subset of $X$. 

Second, it is not difficult to see that $X$ is $T_1$. In fact $X\setminus\{\infty\}$ is $T_2$ (i.e.~Hausdorff), however it is unlikely to be able to separate points $x=\infty$ and $y=\FS(D_\alpha)$.

Now we show that $X$ is a Hindman space. Fix any $f:\omega\to X$. If there is $x\in X$ with $f^{-1}[\{x\}]\notin\Hindman$ then find $D\in[\omega]^\omega$ with $FS(D)\subseteq f^{-1}[\{x\}]$ and observe that $\langle f(n)\rangle_{n\in FS(D)}$ IP-converges to $x$. Thus, we can assume that $f^{-1}[\{x\}]\in\Hindman$ for all $x\in X$. There are two possible cases:
$f^{-1}[X\setminus\omega]\notin\Hindman$ or 
$f^{-1}[\omega]\notin\Hindman$.

If $f^{-1}[X\setminus\omega]\notin\Hindman$ then find $D\in[\omega]^\omega$ with $FS(D)\subseteq f^{-1}[X\setminus\omega]$. As $f^{-1}[\{x\}]\in\Hindman$ for all $x\in X$
and 
$f^{-1}[\{x\}] \neq\emptyset$ only for countably many $x\in X$, using Lemma \ref{FS} we can find $D'\in[\omega]^\omega$ with $FS(D')\subseteq FS(D)$ and such that for each $x\in X\setminus\omega$ there is $k\in\omega$ with $FS(D'\setminus k)\cap f^{-1}[\{x\}]=\emptyset$. Since for each $U\in\mathcal{B}(\infty)$ there are only finitely many $\alpha\in T$ with $\FS(D_\alpha)\notin U$, $\langle f(n)\rangle_{n\in FS(D')}$ IP-converges to $\infty$.

If $f^{-1}[\omega]\notin\Hindman$ then there is $\alpha<\omega_1$ with $(f^{-1}[\omega],f_{\upharpoonright f^{-1}[\omega]})=(H_\alpha,f_\alpha)$. 
We have two subcases: $\alpha\in T$ and $\alpha\notin T$.

Assume $\alpha\in T$.
Using Lemma~\ref{FS}, we find an infinite set $D'\subseteq \omega$ such that 
$\FS(D')\subseteq \FS(D_\alpha)$ and 
for each $k\in\omega$ there is $n\in \omega$ with 
$\FS(D'\setminus n)\cap f^{-1}[k] = \emptyset$.
Since each $U\in\mathcal{B}(\FS(D_\alpha))$ is of the form $\{\FS(D_\alpha)\}\cup f[FS(D_\alpha\setminus k)]\setminus k$ for some $k\in \omega$,
the subsequence 
$\langle f(n)\rangle_{n\in FS(D')}$ IP-converges to $\FS(D_\alpha)$.

Assume $\alpha\notin T$.
Then there is $\beta<\alpha$ such that for each $k\in\omega$ there is $n\in\omega$ with  $f_\alpha[FS(D_\alpha\setminus n)]\subseteq f_\beta[FS(D_\beta\setminus k)]\setminus k$. 
If we take the smallest $\beta<\alpha$ with the above property, then $\beta\in T$
and $\langle f(n)\rangle _{n\in FS(D_\alpha)}$ IP-converges to $\FS(D_\beta)\in X$.

Now we check that $X$ is not an $\I$-space. Define $f:\omega\to X$ by $f(n)=n$ and fix any $B\notin\I$. We claim that $\langle f(n)\rangle _{n\in B}$ does not converge. Clearly, $\langle f(n)\rangle_{n\in B}$ cannot converge to any $x\in\omega$. Moreover, it cannot converge to any $\FS(D_\alpha)$ as 
$U=\{\FS(D_\alpha)\}\cup f_\alpha[ \FS(D_\alpha)]$ is an open neighborhood of $\FS(D_\alpha)$ and $f[B]\setminus U=B\setminus U$ is infinite (since $B\notin\I$ and $f_\alpha[ \FS(D_\alpha)] \in\I$). 

We will show that  $\langle f(n)\rangle _{n\in B}$ cannot converge to $\infty$. Since $B$ is infinite, there is $\alpha<\omega_1$ with 
$H_\alpha=\omega$ and $f_\alpha:H_\alpha\to B$ being the increasing enumeration of $B$.
In particular, $f_\alpha[H_\alpha] = B$.
We have 2 cases: $\alpha\in T$ or $\alpha\notin T$.
 
Suppose first that $\alpha\in T$. Then $f_\alpha[\FS(D_\alpha)]\subseteq B$. 
Since $f_\alpha[\FS(D_\alpha\setminus k)]$ 
is infinite for every $k$, we can inductively take $c_k\in f_\alpha[\FS(D_\alpha\setminus k)] \setminus\{c_i:i<k\}$ to obtain an infinite set $C=\{c_k:k\in\omega\}$
such that 
$C\subseteq B$ and $C\setminus f_\alpha[\FS(D_\alpha\setminus k)]$ is finite for all $k\in\omega$.
Now observe that for each $\beta\in T\setminus\{\alpha\}$ there is $U_\beta\in\mathcal{B}(\FS(D_\beta))$ with $U_\beta\cap C=\emptyset$. Indeed, fix $\beta\in T\setminus\{\alpha\}$. By condition \eqref{eq:W1} there is $k\in\omega$ such that  $f_\alpha[\FS(D_\alpha\setminus k)]\cap f_\beta[\FS(D_\beta\setminus k)]=\emptyset$. 
Let $k_\beta = \max(k,\max(C\setminus f_\alpha[\FS(D_\alpha\setminus k)]))$.
Then $U_\beta=\{\FS(D_\beta)\}\cup f_\beta[\FS(D_\beta\setminus k_\beta)]\setminus k_\beta$ is in $\mathcal{B}(\FS(D_\beta))$ and $U_\beta \cap C=\emptyset$. 
Therefore, $U=\{\infty\}\cup\bigcup_{\beta\in T\setminus \{\alpha\}}U_\beta$ is an open neighborhood of $\infty$ which is disjoint with $C$. Since $C\in[B]^\omega$, this shows that $\langle f(n)\rangle_{n\in B}$ cannot converge to $\infty$.

Suppose now that $\alpha\notin T$. Then there is $\beta<\alpha$ such that for each $k\in\omega$ there is $n\in\omega$ with $f_\alpha[\FS(D_\alpha\setminus n)]\subseteq f_\beta[FS(D_\beta\setminus k)]$. 
If we take the smallest $\beta<\alpha$ with the above property, then $\beta\in T$. 
Then similarly as in the first case, we find an infinite $C\subseteq B$ with $C\setminus f_\beta[FS(D_\beta\setminus k)]$ finite for all $k\in\omega$, and consequently we can find an open neighborhood $U$ of $\infty$ disjoint with $C$.

To finish the entire proof we have to check that limits of convergent sequences and IP-limits of IP-convergent IP-sequences in $X$ are unique. 
By Proposition~\ref{prop:IP-limit-unique-iff-limit-unique} we only have to show the uniqueness of limits. 

Fix a convergent sequence $x_n\in X$. Since $X\setminus\{\infty\}$ is $T_2$ (and limits in $T_2$ space are unique) and $\omega$ is discrete in $X$ (so sequences with limits in $\omega$ have  unique limits) we only need to check that if $\langle x_n\rangle$ converges to some $\FS(D_\alpha)$ then it cannot converge to $\infty$. So suppose that $\langle x_n\rangle$ converges to $A_\alpha$.

If  $x_n= FS(D_\alpha)$ for infinitely many $n$ 
then $U=\{\infty\}\cup\bigcup_{\beta\in T\setminus \{\alpha\}}\{\FS(D_\beta)\}\cup f_\beta[\FS(D_\beta)]$ is an open neighborhood of $\infty$ which omits infinitely many elements of $\langle x_n\rangle$. Hence  $\langle x_n\rangle$ cannot converge to $\infty$.

Suppose now that $x_n \neq  FS(D_\alpha)$ for all but finitely many $n$. Then the set $C=\omega\cap \{x_n:n\in\omega\}$ is infinite.
Since $\langle x_n\rangle$ converges to $FS(D_\alpha)$, $C\setminus f_\alpha[\FS(D_\alpha\setminus k)]$ is finite for any $k\in \omega$.
Then, using condition \eqref{eq:W1}, for each $\beta\in T\setminus\{\alpha\}$ we can find $k_\beta\in\omega$ with $C\cap (f_\beta[\FS(D_\beta\setminus k_\beta)]\setminus k_\beta) = \emptyset$.
Therefore, $U=\{\infty\}\cup\bigcup_{\beta\in T\setminus \{\alpha\}}\{\FS(D_\beta)\}\cup f_\beta[\FS(D_\beta\setminus k_\beta)]\setminus k_\beta$ is an open neighborhood of $\infty$ which is disjoint with $C$. Hence $\langle x_n\rangle$ cannot converge to $\infty$.
\end{proof}

At this point a natural conjecture would be that if $\I\leq_K\Hindman$ then every Hindman space is an $\I$-space. We were able to prove it only under one additional assumption on the ideal $\I$.

An ideal $\I$ is called \emph{P$^+$-ideal} if for every decreasing sequence $A_1,A_2,\dots\notin\I$ there is $A\notin \I$ such that $A\setminus A_n$ is finite for each $n\in \omega$ (\cite[p.~32]{MR2777744}).

\begin{proposition}
	\label{P+}
If $\I\leq_K\Hindman$ and $\I$ is a P$^+$-ideal, then every Hindman space is an $\I$-space.
In particularly, if $\I$ is an $F_\sigma$ ideal and $\I\leq_K\Hindman$ then every Hindman space is an $\I$-space.
\end{proposition}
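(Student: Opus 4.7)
The plan is to pull an arbitrary sequence in a Hindman space back through the Kat\v{e}tov witness, apply the Hindman property to harvest an IP-convergent IP-subsequence, and then read off a convergent $\I$-positive subsequence of the original sequence using P$^+$. Concretely, given a Hindman space $X$ and a sequence $\langle x_k\rangle_{k\in\omega}$ in $X$, I would first fix a function $f\colon\omega\to\omega$ witnessing $\I\leq_K\Hindman$ and consider the pulled-back sequence $y_n=x_{f(n)}$. Applying the Hindman property of $X$ to $\langle y_n\rangle$ yields an infinite $D\subseteq\omega$ and a point $x^*\in X$ such that $\langle y_n\rangle_{n\in\FS(D)}$ IP-converges to $x^*$.

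Next I would set $F_m=f[\FS(D\setminus m)]$ for every $m\in\omega$; by monotonicity these form a decreasing sequence $F_0\supseteq F_1\supseteq\cdots$. The key observation is that $F_m\notin\I$ for each $m$: the inclusion $\FS(D\setminus m)\subseteq f^{-1}[F_m]$ together with the fact that $\FS(D\setminus m)$ is an IP-set (and therefore lies outside $\Hindman$) forces $f^{-1}[F_m]\notin\Hindman$, and the Kat\v{e}tov property of $f$ then forbids $F_m\in\I$.

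Having produced a decreasing sequence of $\I$-positive sets, I would invoke the P$^+$ assumption on $\I$ to obtain $A\notin\I$ with $A\setminus F_m$ finite for every $m$. The verification that $\langle x_k\rangle_{k\in A}$ converges to $x^*$ is then immediate: for any open neighborhood $U$ of $x^*$, IP-convergence supplies $m$ with $x_{f(n)}\in U$ for all $n\in\FS(D\setminus m)$, so $F_m\subseteq\{k:x_k\in U\}$, and hence $\{k\in A:x_k\notin U\}\subseteq A\setminus F_m$ is finite. Since $A\notin\I$, this shows $X$ is an $\I$-space.

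For the ``in particular'' clause I would appeal to the classical fact (essentially due to Mazur, cf.~\cite{MR2777744}) that every $F_\sigma$ ideal on $\omega$ is a P$^+$-ideal, thereby reducing the second statement to the first. I do not foresee a genuine obstacle in the argument: the only step demanding care is the $\I$-positivity of the sets $F_m$, and that is a one-line application of the contrapositive of the Kat\v{e}tov property.
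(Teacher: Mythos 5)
Your proposal is correct and follows essentially the same route as the paper's proof: pull the sequence back through the Kat\v{e}tov witness, apply the Hindman property, observe that the sets $f[\FS(D\setminus m)]$ are $\I$-positive and decreasing, and use P$^+$ to extract a convergent $\I$-positive subsequence. The only (welcome) difference is that you spell out the contrapositive argument for $\I$-positivity of the sets $F_m$, which the paper leaves implicit.
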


\begin{proof}
The ``in particular'' part follows
from the first part as it is well known that  each $F_\sigma$ ideal is a P$^+$-ideal 
(see e.g.~\cite[Proposition~5.1]{MR2961261}).

Let $X$ be a Hindman space and fix a sequence $\langle x_n\rangle_{n\in \omega}$ in $X$. Let $f:\omega\to\omega$ be a witness for $\I\leq_K\Hindman$ and define $y_n=x_{f(n)}$ for all $n\in\omega$. Then there is an infinite $D\subseteq\omega$ and $x\in X$ such that for each open neighborhood $U$ of the point $x$ there is $m\in\omega$ with $\{y_n: n\in \FS(D\setminus m)\}\subseteq U$. We have $f[\FS(D\setminus n)]\notin\I$ for each $n\in\omega$. Consider the decreasing sequence of $\I$-positive sets given by $A_i=f[\FS(D\setminus i)]$ for all $i\in\omega$. Since $\I$ is a P$^+$-ideal, there is $B\notin\I$ with $B\setminus A_i$ finite for all $i\in\omega$. We claim that $\langle x_n\rangle_{n\in B}$ converges. Indeed, let $U$ be an open neighborhood of $x$. Then there is $k\in\omega$ with $\{x_n: n\in f[\FS(D\setminus k)]\}=\{y_n: n\in \FS(D\setminus k)\}\subseteq U$. Thus, $\{n\in B: x_n\notin U\}\subseteq B\setminus f[FS(D\setminus k)] = B\setminus A_k$ and the latter set is finite.
\end{proof}

\begin{example}
The family $\Folkman = \{A\subseteq\omega: \exists k\in\omega \,\forall D\in [\omega]^k  \,(\FS(D)\setminus A\neq \emptyset)\}$ is an ideal on $\N$ (it follows from a version of Folkman's theorem see e.g.~\cite[a remark before Theorem~1.3 at page 43]{MR2970862}). 
(This ideal was introduced in \cite{MR2471564}, where the author proved that consistently there is an $\Folkman$-space which is not an $\I_{1/n}$-space. This ideal was also considered in \cite{MR3594409}, where the authors posed a question whether it is homogeneous.)
It is not difficult to show that $\cF$ is $F_\sigma$.
It is obvious that $\Folkman\subseteq \Hindman$, so $\Folkman\leq_K\Hindman$ and each Hindman space is an $\Folkman$-space.
\end{example}

Combining Theorem \ref{THM} and Proposition \ref{P+} we get the following result.

\begin{corollary}
	\label{cor:Katetov-iff-spaces-for-Fsigma}
	If $\I$ is an $F_\sigma$ ideal on $\omega$, then the following conditions are equivalent.
\begin{enumerate}
	\item 	$\I\not\leq_K\Hindman$.
\item The continuum hypothesis implies that there is 
	a Hindman space which is not an $\I$-space.
\item It is consistent that there is 
a Hindman space which is not an $\I$-space.
\end{enumerate}	
\end{corollary}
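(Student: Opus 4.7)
The plan is to prove the corollary as a three-way cycle $(1)\Rightarrow(2)\Rightarrow(3)\Rightarrow(1)$, with each implication reducing to a result already established in the paper, so essentially the work here is bookkeeping rather than new argument.

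For $(1)\Rightarrow(2)$, I would simply invoke Theorem~\ref{THM}: assuming the continuum hypothesis, whenever $\I\not\leq_K\Hindman$ there is a $T_1$ separable Hindman space which is not an $\I$-space. No $F_\sigma$ hypothesis is needed here; Theorem~\ref{THM} applies to arbitrary ideals. For $(2)\Rightarrow(3)$, I would remark that this is immediate because the continuum hypothesis is consistent with ZFC (by G\"{o}del's theorem on the consistency of CH with ZFC); so any statement proved under CH is automatically consistent with ZFC.

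For $(3)\Rightarrow(1)$, I would argue by contrapositive. Suppose $\I\leq_K\Hindman$. Since $\I$ is assumed to be an $F_\sigma$ ideal, the ``in particular'' clause of Proposition~\ref{P+} applies and yields that every Hindman space is an $\I$-space. This is a ZFC statement, so it holds in every model of ZFC. Hence it is \emph{not} consistent with ZFC that there exist a Hindman space which fails to be an $\I$-space, contradicting~(3). Therefore $\I\not\leq_K\Hindman$, which is~(1).

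Since every step is a direct appeal to the previously proved theorem, proposition, or elementary consistency reasoning, there is no genuine obstacle in the proof; the only subtle point is remembering that the $F_\sigma$ assumption is what activates the P$^+$-ideal clause of Proposition~\ref{P+} (via the cited fact that every $F_\sigma$ ideal is P$^+$), which is precisely what makes $(3)\Rightarrow(1)$ go through.
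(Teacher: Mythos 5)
Your proposal is correct and is exactly the argument the paper intends: the authors state the corollary with the remark that it follows by ``combining Theorem~\ref{THM} and Proposition~\ref{P+}'', which unpacks precisely to your cycle $(1)\Rightarrow(2)$ via Theorem~\ref{THM}, $(2)\Rightarrow(3)$ via the consistency of CH, and $(3)\Rightarrow(1)$ by contrapositive from the $F_\sigma$ (hence P$^+$) clause of Proposition~\ref{P+}. Nothing is missing.
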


In \cite{MR2052425} Jones posed a question whether it is consistent that there is a Hindman space which is not van der Waerden. Since the ideal $\cW$ is $F_\sigma$ and $\cW$-spaces coincide with  van der Waerden spaces, the question of Jones is reduced to a purely combinatorial question whether $\cW\not\leq_K\cH$.

%%%%%%%%%%%%%%%%%%%%%%%%%%%%%%%%%%%%%%%%%%%%%%%%%%
%%%%% SECTION
%%%%%%%%%%%%%%%%%%%%%%%%%%%%%%%%%%%%%%%%%%%%%%%%%%

\section{A Hindman space which is not an $\I_{1/n}$-space}
\label{sec:Hindman-spaces-which-is-not-summable-space}

From the previous section we know that there are $F_\sigma$ ideals below $\mathcal{H}$ in the Kat\v{e}tov order. At this point one may ask about existence of $F_\sigma$ ideals that are not below $\mathcal{H}$ in the Kat\v{e}tov order. In this section we show that $\I_{1/n}$ is an example of such ideal. By the main theorem from the previous section, this answers in positive a question posed by Jana Fla\v{s}kov\'{a} at the 22nd Summer Conference on Topology and its Applications \cite{Flaskova-slides} about existence of a Hindman space which is not $\I_{1/n}$-space.

\begin{lemma}
\label{lemma:FS-subset-of-very-sparse-set}
	If $D\in[\omega]^\omega$ is very sparse and $E\in [\omega]^\omega$ is such that $\FS(E)\subseteq\FS(D)$, then for every $d\in \omega$ there is $e\in \omega$ such that $\FS(E\setminus e)\subseteq\FS(D\setminus d)$.
\end{lemma}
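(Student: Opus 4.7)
The plan is to exploit two consequences of $D$ being very sparse: every element of $\FS(D)$ has a unique representation, and the representing sets are severely restricted by the ``sum of two overlapping pieces leaves $\FS(D)$'' clause.

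First I would introduce, for each $x\in\FS(D)$, the unique finite $F(x)\subseteq D$ with $x=\sum_{i\in F(x)}i$ (this exists and is unique by sparsity of $D$). The crucial observation is that for distinct $x,y\in E$ the sets $F(x)$ and $F(y)$ are disjoint. Indeed, $x+y\in\FS(E)\subseteq\FS(D)$; but if $F(x)\cap F(y)\ne\emptyset$, very sparsity of $D$ would force $\sum_{i\in F(x)}i+\sum_{i\in F(y)}i\notin\FS(D)$, a contradiction. This pairwise disjointness is the one real insight; the rest is bookkeeping.

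Now fix $d\in\omega$. Since the family $\{F(x):x\in E\}$ is pairwise disjoint and each $F(x)\subseteq D$, the finite set $D\cap d$ can meet at most $|D\cap d|$ of the sets $F(x)$. Hence
\[
B=\{x\in E:F(x)\cap d\ne\emptyset\}
\]
is finite. Choose $e\in\omega$ strictly greater than $\max B$ (take $e=0$ if $B=\emptyset$). Then every $x\in E\setminus e$ satisfies $F(x)\subseteq D\setminus d$.

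Finally, I would verify $\FS(E\setminus e)\subseteq \FS(D\setminus d)$. Take $z\in\FS(E\setminus e)$ and write $z=\sum_{x\in G}x$ with $G\in[E\setminus e]^{<\omega}$ nonempty. Using the pairwise disjointness of the $F(x)$'s for $x\in G$,
\[
z=\sum_{x\in G}\sum_{i\in F(x)}i=\sum_{i\in\bigcup_{x\in G}F(x)}i,
\]
and $\bigcup_{x\in G}F(x)$ is a finite nonempty subset of $D\setminus d$, so $z\in\FS(D\setminus d)$. The only subtlety is the disjointness claim about the $F(x)$'s, which is exactly where ``very sparse'' (rather than merely sparse) is used; after that the argument is purely formal.
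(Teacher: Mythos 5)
Your proof is correct. It rests on the same key fact as the paper's proof — that very sparsity of $D$ forbids two elements of $\FS(D)$ with overlapping $D$-supports from having their sum in $\FS(D)$ — but you deploy it differently. The paper argues by contradiction: assuming the conclusion fails for some $d$, it extracts an infinite sequence $x_0<x_1<\cdots$ from $\FS(E)$ (with increasing $E$-supports, so that $x_n+x_k\in\FS(E)$), notes each support must meet the finite set $D\cap d$, and gets two overlapping supports by pigeonhole. You instead prove directly that the supports $F(x)$ for $x\in E$ are pairwise disjoint, conclude that only finitely many elements of $E$ have support meeting $d$, and read off an explicit $e$; you then still have to verify the containment $\FS(E\setminus e)\subseteq\FS(D\setminus d)$, which you do correctly via the disjoint union $\bigcup_{x\in G}F(x)$. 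Your route is slightly more constructive and yields the extra structural information that every element of $\FS(E)$ decomposes as a sum over a disjoint union of $E$-supports (essentially the same observation the paper exploits separately inside Lemma~\ref{FS1}), at the cost of one additional verification step; the paper's contrapositive argument is shorter because it never needs that final decomposition.
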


\begin{proof}
	Suppose that there is $d\in \omega$ such that for every $e\in \omega$ we have  $\FS(E\setminus e)\setminus \FS(D\setminus d)\neq \emptyset$.
	Then we pick inductively numbers $x_n$ such that $x_n\in \FS(E\setminus \{0,1,\dots,x_{n-1}\})\setminus \FS(D\setminus d)$.
	Then $x_n+x_k\in \FS(E)$ for any $n\neq k$.
	Since $x_n\in \FS(E)\subseteq\FS(D)$ and $D$ is sparse, there is a unique set $F_{n}\subseteq D$ with $x_n=\sum F_n$. Since $x_n\notin \FS(D\setminus d)$, we have $F_n\cap d\neq \emptyset$.
	Thus, there is $n\neq k$ with $F_n\cap F_k\neq\emptyset$.
But $D$ is very sparse, so $x_n+x_k\notin \FS(D)$, a contradiction. 
\end{proof}

\begin{theorem}\ 
	\begin{enumerate}
		\item 
$\I_{1/n}\not\leq_K\Hindman$ i.e.~there is no function $f:\omega\to\omega$ such that $f^{-1}[A]\in\Hindman$ for every $A\in \I_{1/n}$.
\item 
Assume the continuum hypothesis. There is a Hindman space which is not $\I_{1/n}$-space.
	\end{enumerate}
\end{theorem}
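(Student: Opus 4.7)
Part (2) is an immediate corollary of (1) together with Theorem~\ref{THM}: once we know $\I_{1/n}\not\leq_K\Hindman$, that theorem gives, under CH, a Hindman space which is not an $\I_{1/n}$-space.

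For part (1), I would argue by contradiction. Suppose there exists $f:\omega\to\omega$ with $f^{-1}[A]\in\Hindman$ for every $A\in\I_{1/n}$. Since each singleton lies in $\I_{1/n}$, every fiber $f^{-1}[\{k\}]$ is in $\Hindman$, and hence so is every finite union $A_n:=f^{-1}[\{0,\dots,n-1\}]$ (as $\Hindman$ is an ideal). The plan is to produce an infinite $D\subseteq\omega$ with $f[\FS(D)]\in\I_{1/n}$; taking $A:=f[\FS(D)]$ then gives $A\in\I_{1/n}$ while $f^{-1}[A]\supseteq\FS(D)$ is an IP-set, contradicting our standing assumption.

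To construct $D$, first apply Lemma~\ref{FS} to the sequence $\{A_n\}_{n\in\omega}$ and then Lemma~\ref{verysparse}, obtaining an infinite very sparse $D_0\subseteq\omega$ together with numbers $k_n\in\omega$ such that $f(s)\geq n$ for every $s\in\FS(D_0\setminus k_n)$. Then, using Lemma~\ref{lemma:FS-subset-of-very-sparse-set} to carry tail information along a subset, choose inductively $D=\{d_0<d_1<\cdots\}\subseteq D_0$ with each $d_j\in D_0$ so large that $d_j\geq k_{N_j}$ for a rapidly growing sequence $N_j$ (for example $N_j=2^{2^j}$). Because $D_0$ is very sparse, each $s\in\FS(D)$ has a unique representation as a sum of distinct $d_j$'s, and the choice of the $d_j$'s forces that any $s\in\FS(D)$ with $f(s)<n$ must involve at least one summand from $D\cap[0,k_n)$, a set whose cardinality grows only logarithmically in $n$ by construction.

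The principal obstacle, which I expect to require the most care, is the combinatorial bookkeeping proving $\sum_{a\in f[\FS(D)]}1/(a+1)<\infty$. The naive bound $f(s)\geq N_{\mu(s)}$, where $\mu(s)$ is the minimum-index summand in the representation of $s$, is insufficient because infinitely many $s\in\FS(D)$ share any given value of $\mu$. Instead I would bound, for each dyadic interval $[2^n,2^{n+1})$, the number of distinct $f$-values that $\FS(D)$ contributes there: every such sum must use one of the few elements of $D$ below $k_{2^{n+1}}$, whose count we have arranged to be at most $\log_2\log_2(2^{n+1})$, so that the contribution of each dyadic block to $\sum 1/(a+1)$ is summably small. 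Summing the resulting geometric-type series yields $f[\FS(D)]\in\I_{1/n}$ and therefore the desired contradiction.
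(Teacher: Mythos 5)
Your treatment of part (2) and your overall strategy for part (1) --- assume all fibers $f^{-1}[\{k\}]$ lie in $\Hindman$, produce an infinite $D$ with $f[\FS(D)]\in\I_{1/n}$, and derive a contradiction since $\FS(D)\subseteq f^{-1}\left[f[\FS(D)]\right]$ is an IP-set --- match the paper. The gap is in the combinatorial core. The tail-avoidance property you extract from Lemma~\ref{FS} (namely $f(s)\geq n$ for $s\in\FS(D_0\setminus k_n)$) controls $f$ only on sums built \emph{entirely} from large elements of $D$. It gives no information whatsoever about $f(y+t)$ where $y$ is a fixed nonempty sum of ``small'' elements of $D$ and $t$ ranges over $\FS$ of a tail: $f$ is an arbitrary function, so $f(y+t)$ is not determined by $f(t)$ or by $f(y)$, and for a single such $y$ the set $\{f(y+t): t\in\FS(D\setminus k)\}$ could a priori be all of $\omega$. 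Consequently your dyadic-block count fails: knowing that every $s\in\FS(D)$ with $f(s)<2^{n+1}$ must use one of the roughly $\log_2(n+1)$ elements of $D$ below $k_{2^{n+1}}$ does not bound the number of \emph{distinct values} $f(s)$ in $[2^n,2^{n+1})$, because each such small element participates in infinitely many sums and nothing you have arranged prevents those sums from realizing all $2^n$ values of the block. The step ``few small summands, hence few values per block'' is a non sequitur.

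The missing idea --- and the part on which the paper's proof spends essentially all of its effort --- is a dichotomy applied to each translate $y+\FS(\cdot)$, for $y$ ranging over the finitely many sums of the elements chosen so far. Either $f$ is constant on $y+\FS(C)$ for some sub-IP-set $C$ (then that translate contributes only one new value, which is shown to be either already counted or larger than $a_{n-1}$), or no such $C$ exists, in which case each set $\{t: f(y+t)=c\}$ is in $\Hindman$ relative to the current IP-set and Lemma~\ref{FS} yields a sub-IP-set on whose tails $f(y+\cdot)$ exceeds $a_m$. Recording which alternative holds for each $y$ (the bookkeeping functions $g_n$ in the paper) is exactly what makes the estimate $\sum_{i\in f[\FS(X_n)]\setminus f[\FS(X_{n-1})]}1/(i+1)\leq 2^{n}/(a_{n-1}+1)$ possible, with $\sum_n 2^{n+1}/(a_n+1)<\infty$ chosen in advance. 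Without some version of this per-translate dichotomy, your choice of $D$ cannot be guaranteed to satisfy $f[\FS(D)]\in\I_{1/n}$.
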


\begin{proof}
	Since (2) follows from (1) and Theorem~\ref{THM}, below we show only (1).
	
Let $f:\omega\to\omega$ be any function. If there is $n\in\omega$ with $f^{-1}[\{n\}]\notin\Hindman$ then we are done, so we can assume that $f^{-1}[\{n\}]\in\Hindman$ for all $n\in\omega$. 

Let $a_0,a_1,\ldots\in \omega$ be such that
$$\sum_{n\in \omega}\frac{2^{n+1}}{a_n+1}<\infty.$$

Since $f^{-1}[\{0,1,\ldots,a_n\}]\in\Hindman$ for all $n\in\omega$, there is a very sparse infinite $D\subseteq\omega$ such that for each $n\in\omega$ there is $k\in\omega$ with $FS(D\setminus k)\cap f^{-1}[\{0,1,\ldots,a_n\}]=\emptyset$ (by Lemmas \ref{verysparse} and \ref{FS}). 
By shrinking and re-enumerating $D$ if necessary, we can assume that $\FS(D\setminus d_n)\cap f^{-1}[\{0,1,\ldots,a_n\}]=\emptyset$, where $\langle d_i:i\in\omega\rangle$ is the increasing enumeration of $D$. 

We will inductively pick $x_n\in \omega$, $D_n\in [\omega]^\omega$ and auxiliary functions $g_n$ such that for each $n\in\omega$:
\begin{itemize}
\item $x_n<x_{n+1}$, $x_n\in D_n$ and $\FS(D_n\setminus x_n)\cap f^{-1}[\{0,1,\ldots,a_n\}]=\emptyset$;
\item $\FS(D)\supseteq \FS(D_n)\supseteq \FS(D_{n+1})$,
% (in particularly $x_n,x_{n+1},\dots\in D_n$),
\item $g_n:\FS(\{x_i:i<n\})\to\{0,1\}$,
\end{itemize}
and for each $y\in \FS(\{x_i:i<n\})$:
\begin{itemize}
\item if $g_n(y)=1$ then $(y+\FS(D_n))\cap f^{-1}[\{0,1,\ldots,a_n\}]=\emptyset$ (in particular, $f(y+x_n)>a_n$),
\item if $g_n(y)=0$ and either $g_{n-1}(y)=0$ or $y\in (x_{n-1}+(\FS(\{x_i:i<n-1\})\cap g^{-1}_{n-1}[\{0\}]))$ then $f\restriction (y+\FS(D_n))$ is a constant function whose value is some $c\in f[\FS(\{x_i:i<n\})]$,
\item if $g_n(y)=0$ and either $g_{n-1}(y)=1$ or $y\in (x_{n-1}+(\FS(\{x_i:i<n-1\})\cap g^{-1}_{n-1}[\{1\}]))$ or $y=x_{n-1}$ then $f\restriction (y+FS(D_n))$ is a constant function whose value is some $c>a_{n-1}$.
\end{itemize}

(The initial step for $n=0$)
We start with $D_0=D$, $g_0=\emptyset$ and  any $x_0 \in D_0$. 
%Since $\{x_i:i<0\}$ is empty, $g_0:\FS(\{x_i:i<0\}) \to\{0,1\}$.
Since $d_0\leq x_0$, we have 
$\FS(D_0\setminus x_0)\cap f^{-1}[\{0,1,\ldots,a_0\}] = \emptyset$.

\smallskip

(The initial step for $n=1$)
We have 2 cases.

\emph{Case 1.} 
There is $c\in\omega$ such that $f^{-1}[\{c\}]$ contains $x_0+\FS(C)$ for some infinite $C$ with $\FS(C)\subseteq \FS(D_0\setminus \{0,1,\dots,x_0\})$.
We define $D_1=C$, $g_1(x_0)=0$ and, using Lemma~\ref{lemma:FS-subset-of-very-sparse-set}, pick $x_1\in D_1$
such that 
$\FS(D_1\setminus x_1)\subseteq \FS(D\setminus d_1)$.

Since 
$\FS(D\setminus d_1)\cap f^{-1}[\{0,1,\ldots,a_1\}]=\emptyset$, 
$\FS(D_1\setminus x_1)\cap f^{-1}[\{0,1,\ldots,a_1\}]=\emptyset$.

Since $x_0+\FS(D_1) \subseteq x_0 + \FS(D_0\setminus \{0,1,\dots,x_0\})\subseteq \FS(D_0\setminus x_0)$ and
$\FS(D_0\setminus x_0)\cap f^{-1}[\{0,1,\ldots,a_0\}] = \emptyset$, we obtain $c>a_0$.

\emph{Case 2.} 
For each $c\in\omega$ the set $f^{-1}[\{c\}]$ does not contain any $x_0+\FS(C)$ for infinite $C$ with 
$\FS(C)\subseteq \FS(D_0\setminus \{0,1,\dots,x_0\})$, so using Lemma \ref{FS} we can find an infinite $C$ with $\FS(C)\subseteq \FS(D_0\setminus \{0,1,\dots,x_0\})$ such that for each $m\in\omega$ there is $k_m\in\omega$ with $(x_0+\FS(C\setminus k_m))\cap f^{-1}[\{0,1,\ldots,a_m\}]=\emptyset$. Put $D_1=C\setminus k_1$,
 $g_1(x_0)=1$ and, using Lemma~\ref{lemma:FS-subset-of-very-sparse-set}, pick $x_1\in D_1$
 such that 
 $\FS(D_1\setminus x_1)\subseteq \FS(D\setminus d_1)$.
  
Since 
$\FS(D\setminus d_1)\cap f^{-1}[\{0,1,\ldots,a_1\}]=\emptyset$, 
$\FS(D_1\setminus x_1)\cap f^{-1}[\{0,1,\ldots,a_1\}]=\emptyset$

Since $x_1\in \FS(D_1) \subseteq \FS(D_0\setminus \{0,1,\dots,x_0\})$, we obtain $x_0<x_1$.

Moreover, $(x_0+\FS(D_1))\cap f^{-1}[\{0,1,\ldots,a_1\}]= (x_0+\FS(C\setminus k_1))\cap f^{-1}[\{0,1,\ldots,a_1\}]=\emptyset$.

\smallskip

(The inductive step)
Suppose now that $x_i$, $g_i$ and $D_i$ for all $i<n$ are already defined. 
Let 
\begin{equation*}
	\begin{split}
\{t_0,\ldots,t_{k}\} 
&= 
\FS(\{x_i:i<n\})\setminus \\
& \left(
g_{n-1}^{-1}[\{0\}] 
 \cup(x_{n-1}+(\FS(\{x_i:i<n-1\})\cap g_{n-1}^{-1}[\{0\}]))
\right)
\end{split}
\end{equation*}
and define $g_n(y)=0$ for each $y\in g_{n-1}^{-1}[\{0\}]  \cup  (x_{n-1}+(\FS(\{x_i:i<n-1\})\cap g_{n-1}^{-1}[\{0\}]))$.

We will inductively construct auxiliary sets $E_{-1},E_0,\ldots,E_k\in[\omega]^\omega$ such that $\FS(D_{n-1}\setminus \{0,1,\dots,x_{n-1}\})\supseteq \FS(E_{-1}) \supseteq \dots\supseteq \FS(E_{k})$.

We start with $E_{-1}=D_{n-1}\setminus \{0,1,\dots,x_{n-1}\}$. Suppose now that $i\leq k$ and $E_j$ for all $j<i$ are defined. We have 2 cases.

\emph{Case 1.} 
There is $c\in\omega$ such that $f^{-1}[\{c\}]$ contains $t_i+\FS(C)$ for some infinite $C$ with $\FS(C)\subseteq \FS(E_{i-1})$.
We define $E_i=C$ and $g_n(t_i)=0$.

Note that $c>a_{n-1}$ in this case. Indeed, we have 3 cases:
\begin{enumerate}
	\item $t_i=x_{n-1}$,
	\item $t_i\in x_{n-1}+(\FS(\{x_j:j<n-1\}) \cap g_{n-1}^{-1}[\{1\}])$,
	\item $t_i\in \FS(\{x_j:j<n-1\})$.
\end{enumerate}

In the first case, 
$t_i+\FS(E_i)\subseteq t_i+\FS(D_{n-1}\setminus \{0,1,\dots,x_{n-1}\})$
and
$\FS(D_{n-1}\setminus x_{n-1})\cap f^{-1}[\{0,\ldots,a_{n-1}\}]=\emptyset$, so $c>a_{n-1}$.

In the second case, 
$t_i=y+x_{n-1}$ for some $y\in \FS(\{x_j:j<n-1\})$ with $g_{n-1}(y)=1$. 
Then $t_i+\FS(E_i) \subseteq 
t_i+\FS(D_{n-1}\setminus \{0,1,\dots,x_{n-1}\}) 
=
y+x_{n-1}+\FS(D_{n-1}\setminus \{0,1,\dots,x_{n-1}\}) 
\subseteq 
y+\FS(D_{n-1}\setminus x_{n-1})
$.
Since 
$g_{n-1}(y)=1$, $(y+\FS(D_{n-1}))\cap f^{-1}[\{0,1,\ldots,a_{n-1}\}]=\emptyset$. Thus $c>a_{n-1}$.

In the third case, 
$g_{n-1}(t_i)=1$, 
so $(t_i+\FS(D_{n-1}))\cap f^{-1}[\{0,1,\ldots,a_{n-1}\}]=\emptyset$.
Since 
$t_i+\FS(E_i)\subseteq t_i+\FS(D_{n-1})$, we obtain $c>a_{n-1}$.

\emph{Case 2.} 
For each $c\in\omega$ the set $f^{-1}[\{c\}]$ does not contain any $t_i+\FS(C)$ for infinite $C$ with $\FS(C)\subseteq \FS(E_{i-1})$, so using Lemma~\ref{FS} we can find an infinite $C$ with $\FS(C)\subseteq \FS(E_{i-1})$ such that for each $m\in\omega$ there is $k_m\in\omega$ with $(t_i+\FS(C\setminus k_m))\cap f^{-1}[\{0,1,\ldots,a_m\}]=\emptyset$. Put $E_i=C\setminus k_n$ and $g_n(t_i)=1$. 

%\smallskip

Once all $E_i$ for $i\leq k$ are constructed, put $D_n=E_k$ and, using Lemma~\ref{lemma:FS-subset-of-very-sparse-set}, pick $x_n\in D_n$
such that 
$\FS(D_n\setminus x_n)\subseteq \FS(D\setminus d_n)$.

This finishes the inductive construction of the sequences $\langle x_n \rangle$, $\langle D_n \rangle$ and $\langle g_n\rangle$.

\smallskip

Define $X=\{x_i:i\in\omega\}$. Obviously, $\FS(X)\notin\Hindman$. To end the proof we need to check that $f[\FS(X)]\in\I_{1/n}$ i.e.~we have to show that
$$
\sum_{i\in f[\FS(X)]}\frac{1}{i+1} = 
\sum_{i\in f[\FS(X_0)]}\frac{1}{i+1} 
+
\sum_{n=1}^\infty
\left(\sum_{i\in f[\FS(X_n)]\setminus f[\FS(X_{n-1})]}\frac{1}{i+1} \right)<\infty,
$$
where 
$X_n=\{x_0,x_1,\ldots,x_n\}$ for every $n\in\omega$.

Let $n\geq 1$. If $i\in f[\FS(X_n)] \setminus f[\FS(X_{n-1})]$, then either $i=f(x_n)$ or $i = f(y+x_n)$ for some $y\in \FS(X_{n-1})$.
It follows from the properties of the sequence $\langle x_n\rangle$ that $f(x_n)>a_n$ (hence $1/(i+1)<1/(a_n+1)$) and for the value $f(y+x_n)$ we have three cases.
\begin{itemize}
	\item If $g_n(y)=1$ then $f(y+x_n)>a_n$, so $1/(i+1)<1/(a_n+1)$.
	\item If $g_n(y)=0$ and 
	either $g_{n-1}(y)=0$ or $y\in (x_{n-1}+(\FS(\{x_i:i<n-1\})\cap g^{-1}_{n-1}[\{0\}]))$, 
	then $f\restriction (y+\FS(D_n))$ is a constant function whose value is some $c\in f[\FS(\{x_i:i<n\})]$. Note that this case is not possible here because $i\notin f[\FS(X_{n-1})]$.
	
	\item If $g_n(y)=0$ and 
	 either $g_{n-1}(y)=1$ or $y\in (x_{n-1}+(\FS(\{x_i:i<n-1\})\cap g^{-1}_{n-1}[\{1\}]))$ or $y=x_{n-1}$,
	 then $f\restriction (y+FS(D_n))$ is a constant function whose value is some $c>a_{n-1}$, so $1/(i+1)<1/(a_{n-1}+1)$.
\end{itemize}
Thus,
\begin{equation*}
	\begin{split}
		\sum_{i\in f[\FS(X_n)]\setminus f[\FS(X_{n-1})]}\frac{1}{i+1} 
& \leq 
\frac{1}{a_n+1} + \frac{|\FS(X_{n-1})|}{a_{n-1}+1}
\leq 
\frac{2^{n}}{a_{n-1}+1}.
\end{split}
\end{equation*}
All in all, 
$$
\sum_{i\in f[\FS(X)]}\frac{1}{i+1} \leq 
\sum_{i\in f[\FS(X_0)]}\frac{1}{i+1}  + \sum_{n=1}^\infty \frac{2^{n}}{a_{n-1}+1}
=
\frac{1}{f(x_0)+1}  + \sum_{n=1}^\infty \frac{2^{n}}{a_{n-1}+1}<\infty.$$
	This finishes the proof.
\end{proof}

%%%%%%%%%%%%%%%%%%%%%%%%%%%%%%%%%%%%%%%%%%%%%%%%%%
%%%%% REFERENCES
%%%%%%%%%%%%%%%%%%%%%%%%%%%%%%%%%%%%%%%%%%%%%%%%%%

\bibliographystyle{amsplain}
\bibliography{paper}

\providecommand{\bysame}{\leavevmode\hbox to3em{\hrulefill}\thinspace}
\providecommand{\MR}{\relax\ifhmode\unskip\space\fi MR }
% \MRhref is called by the amsart/book/proc definition of \MR.
\providecommand{\MRhref}[2]{%
  \href{http://www.ams.org/mathscinet-getitem?mr=#1}{#2}
}
\providecommand{\href}[2]{#2}
\begin{thebibliography}{10}

\bibitem{MR2970862}
Mathias Beiglb\"{o}ck and Henry Towsner, \emph{Transfinite approximation of
  {H}indman's theorem}, Israel J. Math. \textbf{191} (2012), no.~1, 41--59.
  \MR{2970862}

\bibitem{MR3731714}
Lorenzo Carlucci, \emph{``{W}eak yet strong'' restrictions of {H}indman's
  finite sums theorem}, Proc. Amer. Math. Soc. \textbf{146} (2018), no.~2,
  819--829. \MR{3731714}

\bibitem{MR1039321}
Ryszard Engelking, \emph{General topology}, second ed., Sigma Series in Pure
  Mathematics, vol.~6, Heldermann Verlag, Berlin, 1989, Translated from the
  Polish by the author. \MR{1039321}

\bibitem{MR3097000}
Rafa\l{} Filip\'{o}w, \emph{On {H}indman spaces and the {B}olzano-{W}eierstrass
  property}, Topology Appl. \textbf{160} (2013), no.~15, 2003--2011.
  \MR{3097000}

\bibitem{MR2961261}
Rafa\l{} Filip\'{o}w, Nikodem Mro\.{z}ek, Ireneusz Rec\l{}aw, and Piotr Szuca,
  \emph{{I}-selection principles for sequences of functions}, J. Math. Anal.
  Appl. \textbf{396} (2012), no.~2, 680--688. \MR{2961261}

\bibitem{MR3276758}
Rafa\l{} Filip\'{o}w and Jacek Tryba, \emph{Convergence in van der {W}aerden
  and {H}indman spaces}, Topology Appl. \textbf{178} (2014), 438--452.
  \MR{3276758}

\bibitem{Flaskova-slides}
Jana Fla\v{s}kov\'{a}, \emph{Hindman spaces and summable ultrafilters},
  Presented at the 22nd Summer Conference on Topology and its Applications,
  2007.

\bibitem{MR2471564}
\bysame, \emph{Ideals and sequentially compact spaces}, Topology Proc.
  \textbf{33} (2009), 107--121. \MR{2471564}

\bibitem{MR531271}
H.~Furstenberg and B.~Weiss, \emph{Topological dynamics and combinatorial
  number theory}, J. Analyse Math. \textbf{34} (1978), 61--85 (1979).
  \MR{531271}

\bibitem{MR307926}
Neil Hindman, \emph{The existence of certain ultrafilters on {$N$} and a
  conjecture of {G}raham and {R}othschild}, Proc. Amer. Math. Soc. \textbf{36}
  (1972), 341--346. \MR{307926}

\bibitem{MR349574}
\bysame, \emph{Finite sums from sequences within cells of a partition of
  {$N$}}, J. Combinatorial Theory Ser. A \textbf{17} (1974), 1--11. \MR{349574}

\bibitem{MR2777744}
Michael Hru\v{s}\'{a}k, \emph{Combinatorics of filters and ideals}, Set theory
  and its applications, Contemp. Math., vol. 533, Amer. Math. Soc., Providence,
  RI, 2011, pp.~29--69. \MR{2777744}

\bibitem{MR2052425}
Albin~L. Jones, \emph{A brief remark on van der {W}aerden spaces}, Proc. Amer.
  Math. Soc. \textbf{132} (2004), no.~8, 2457--2460. \MR{2052425}

\bibitem{MR250257}
Miroslav Kat\v{e}tov, \emph{Products of filters}, Comment. Math. Univ.
  Carolinae \textbf{9} (1968), 173--189. \MR{250257}

\bibitem{MR1887003}
Menachem Kojman, \emph{Hindman spaces}, Proc. Amer. Math. Soc. \textbf{130}
  (2002), no.~6, 1597--1602. \MR{1887003}

\bibitem{MR1866012}
\bysame, \emph{van der {W}aerden spaces}, Proc. Amer. Math. Soc. \textbf{130}
  (2002), no.~3, 631--635. \MR{1866012}

\bibitem{MR1950294}
Menachem Kojman and Saharon Shelah, \emph{van der {W}aerden spaces and
  {H}indman spaces are not the same}, Proc. Amer. Math. Soc. \textbf{131}
  (2003), no.~5, 1619--1622. \MR{1950294}

\bibitem{MR3594409}
A.~Kwela and J.~Tryba, \emph{Homogeneous ideals on countable sets}, Acta Math.
  Hungar. \textbf{151} (2017), no.~1, 139--161. \MR{3594409}

\bibitem{MR63650}
S.~Mr\'{o}wka, \emph{On completely regular spaces}, Fund. Math. \textbf{41}
  (1954), 105--106. \MR{63650}

\bibitem{vanderWearden}
B.~L. van~der Waerden, \emph{Beweis einer {B}audetschen {V}ermutung}, Nieuw
  Arch. Wisk. \textbf{15} (1927), 212--216.

\end{thebibliography}

\end{document}